\newtheorem{lem}{Lemma}
\newtheorem{thm}{Theorem}
\newtheorem{cor}{Corollary}
\newtheorem*{remark}{Remark}
\newtheorem*{example}{Example}
\newcommand\amsclass[1]%
\title{An analysis of the derivative-free loss method for solving PDEs}
\author[1]{Jihun Han\thanks{jhan25@albany.edu}}
\author[2]{Yoonsang Lee\thanks{yoonsang.lee@dartmouth.edu}}
\affil[1]{\small Department of Mathematics and Statistics, University at Albany, State University of New York}
\affil[2]{Department of Mathematics, Dartmouth College}
\date{}
\begin{document}
\pagenumbering{arabic}

\maketitle
\begin{abstract}
This study analyzes the derivative-free loss method to solve a certain class of elliptic PDEs and fluid problems using neural networks.
The approach leverages the Feynman–Kac formulation, incorporating stochastic walkers and their averaged values. We investigate how the {time interval} associated with the Feynman–Kac representation and the walker size influence computational efficiency, trainability, and sampling errors.
Our analysis shows that the training loss bias scales proportionally with the {time interval} and the spatial gradient of the neural network, while being inversely proportional to the walker size. Moreover, we demonstrate that the {time interval} must be sufficiently long to enable effective training. These results indicate that the walker size can be chosen as small as possible, provided it satisfies the optimal lower bound determined by the {time interval}. Finally, we present numerical experiments that support our theoretical findings.
\end{abstract}

%

\amsclass{65N15, 65N75, 65C05, 60G46}

\section{Introduction}
Neural networks are widely recognized for their flexibility in approximating complex functions in high-dimensional spaces \cite{cybenko, hornik}. This property has motivated their application to representing solutions of partial differential equations (PDEs). Several neural-network–based approaches have been developed in this context. Physics-Informed Neural Networks (PINNs) \cite{PINN} and the Deep Galerkin Method (DGM) \cite{DGM} employ the strong form of PDEs to define training losses, whereas the Deep Ritz Method (DRM) \cite{DRM} leverages a weak (variational) formulation. Another class of methods is based on stochastic representations of PDEs, including backward stochastic differential equations (BSDEs) and the derivative-free loss method (DFLM) \cite{BSDE, DFLM}. These approaches have demonstrated promising results across a wide range of scientific and engineering applications, particularly in high-dimensional problems where conventional numerical solvers encounter severe limitations \cite{BSDE, DGM, PINNreview}.

\begin{figure}[ht]
\centering
\includegraphics[width=0.7\textwidth]{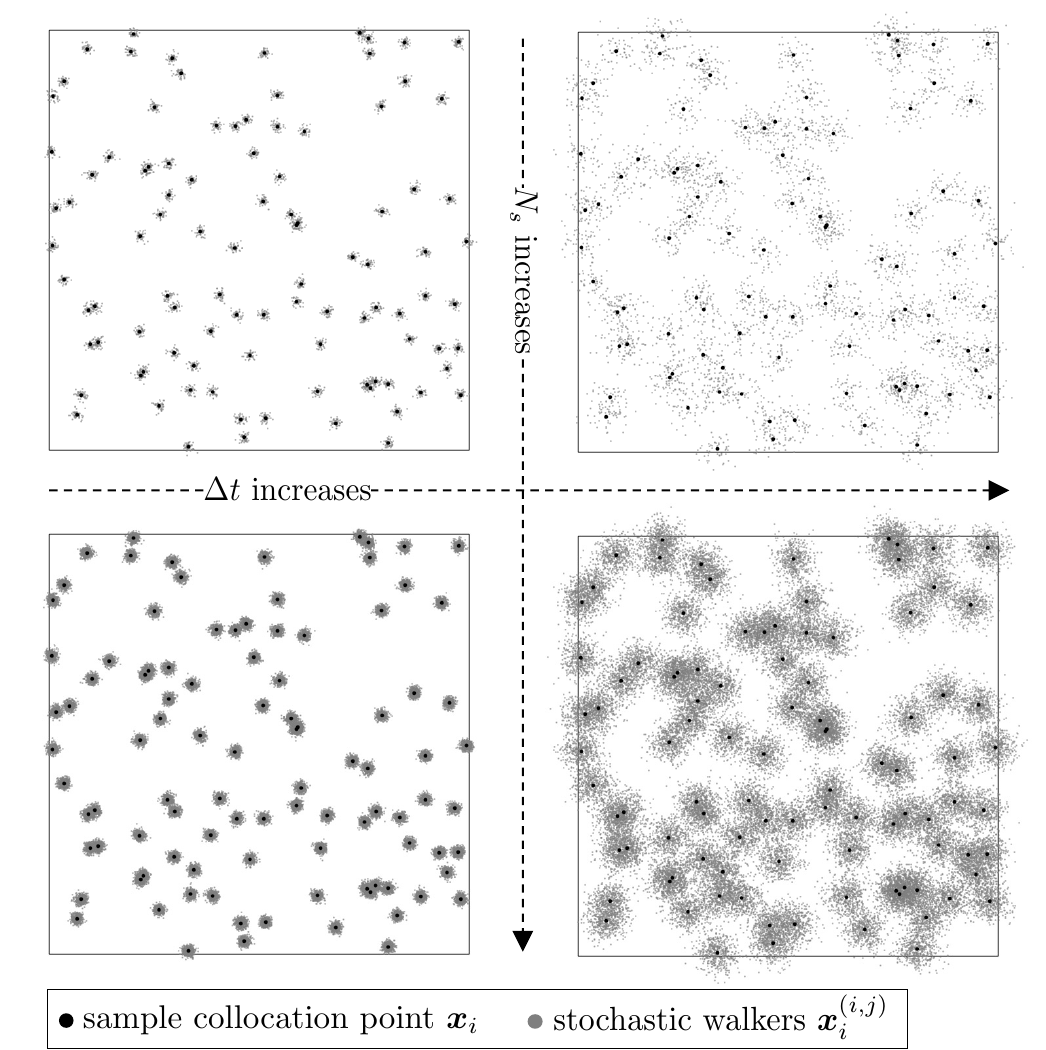}
\caption{Sampling diagram for DFLM}\label{fig:sampling_diagram}
\end{figure}
The present work focuses on the analysis of the Derivative-Free Loss Method (DFLM) \cite{DFLM}. DFLM exploits a stochastic representation of PDE solutions, averaging trajectories of stochastic walkers within a generalized Feynman–Kac framework. Its loss formulation guides a neural network to learn point-to-neighborhood relationships of the solution. Conceptually, DFLM adopts a bootstrapping strategy inspired by reinforcement learning: the target values for training are computed based on the network’s current state through the point-to-neighborhood relation. This iterative scheme incrementally refines the neural network toward solving the PDE. Prior work \cite{DFLM} has shown that this derivative-free formulation offers advantages for handling singularities that arise from geometric features of the domain, such as sharp boundaries. Moreover, the intrinsic averaging mechanism in DFLM has enabled successful applications to homogenization problems \cite{DFLMHomo} and nonlinear flow problems \cite{DFLMflow}.

As with other numerical PDE solvers, DFLM relies on collocation points to enforce constraints. At each collocation point, $N_s$ stochastic walkers are initialized to approximate the expectation in the Feynman–Kac representation. These walkers evolve according to a stochastic process associated with the PDE operator over a short time interval $\Delta t$, which determines the size of a neighborhood to take an average (or expected value). Figure \ref{fig:sampling_diagram} illustrates the role of $N_s$ and $\Delta t$: increasing $N_s$ reduces sampling error by providing more trajectories, while enlarging $\Delta t$ expands the spatial neighborhood explored by the walkers.

A larger $N_s$ decreases variance but increases computational cost, while a longer $\Delta t$ broadens neighborhood coverage but likewise raises the cost of simulation. Our analysis (Theorem \ref{thm:lossbias}) establishes that the empirical training loss has a bias bounded by $\frac{\Delta t}{N_s}$. This result implies that small $N_s$ can be used efficiently, provided $\Delta t$ is kept proportionally small. On the other hand, we prove (Theorem \ref{thm:pointwise_learning}) that $\Delta t$ must exceed a problem-dependent lower bound to ensure that walkers adequately explore neighborhoods; otherwise, the network fails to capture local variations in the solution. Collectively, our analysis highlights a trade-off: $\Delta t$ must be sufficiently large to guarantee learning, while $N_s$ can be chosen as small as possible once this condition is met.

The remainder of the paper is organized as follows. Section \ref{sec:DFLM} reviews the formulation of DFLM and introduces the parameters central to our analysis. Section \ref{sec:analysis} presents the main theoretical results, including bounds on the training loss bias and the conditions under which excessively small $\Delta t$ hampers learning. Section \ref{sec:experiment} provides numerical experiments that validate the theory. Finally, Section \ref{sec:discussion} concludes with a discussion of limitations and potential directions for future research.

\section{Derivative-Free Neural Network Training Method}\label{sec:DFLM}
This section reviews the derivative-free loss method (DFLM) \cite{DFLM}. DFLM addresses PDEs through a stochastic representation inspired by the Feynman–Kac formula, which characterizes how the solution at a point is determined by its surrounding neighborhood. Rather than relying on pointwise PDE residuals, DFLM trains a neural network to directly satisfy these point-to-neighborhood relationships across the domain, thereby recovering the PDE solution. This intrinsic emphasis on interpoint correlation distinguishes DFLM from approaches such as Physics-Informed Neural Networks (PINNs) \cite{PINN}, where the neural network only implicitly learns spatial relationships through residual minimization of the strong-form PDE.

Another distinctive feature of DFLM is its iterative training strategy. The method alternately updates the neural network and the associated target values, in a manner analogous to bootstrapping in reinforcement learning. This contrasts with supervised learning frameworks, where optimization proceeds with fixed targets defined by a prescribed loss function. Through this adaptive update mechanism, DFLM incrementally refines the network toward an accurate PDE solution.

We consider DFLM for the following type of PDEs of an unknown function $u(\bm{x}) \in \mathbb{R}$: 
\begin{equation}\label{eq:Quasi-linear elliptic}
\mathcal{N}[u](\bm{x}):= \frac{1}{2}\Delta u(\bm{x}) + \bm{V}\cdot \nabla_{\bm{x}} u(\bm{x}) - G = 0, ~\textrm{in}~ \Omega \subset \mathbb{R}^k.
\end{equation}
Here $\bm{V}=\bm{V}(\bm{x},u(\bm{x})) \in \mathbb{R}^k$ is the advection velocity and $G=G(\bm{x}, u(\bm{x})) \in \mathbb{R}$ is the force term, both of which can depend on $u$. 

From the standard application of It$\hat{\text{o}}$'s lemma (e.g., in \cite{karatzas}), we have the stochastic representation of the solution of Eq.~\eqref{eq:Quasi-linear elliptic} through the following equivalence;

\begin{itemize}
\item $u:\Omega \rightarrow \mathbb{R}$ is a solution of Eq.~\eqref{eq:Quasi-linear elliptic}. 
\item the stochastic process $q(\Delta t;u, \bm{x}, \{\bm{X}_s\}_{0\leq s\leq {\Delta t}}) \in \mathbb{R}$ defined as 
\begin{equation}\label{eq:q_martingale_stochastic_process}
q({\Delta t};u, \bm{x},\{\bm{X}_s\}_{0\leq s\leq {\Delta t}}) := u(\bm{X}_{\Delta t}) - \int_0^{\Delta t} G(\bm{X}_s,u(\bm{X}_s))ds,\\
\end{equation}
where $\bm{X}_s \in \mathbb{R}^k$ is a stochastic process of the following SDE
\begin{equation}\label{eq:q_stochastic_walkers}
d\bm{X}_s = \bm{V}(\bm{X}_s, u(\bm{X}_s))ds + d\bm{B}_s,\quad \bm{B}_s:\mbox{ standard Brownian motion in }\mathbb{R}^k,
\end{equation}
satisfies the martingale property
\begin{align}\label{eq:q-martingale}
\begin{split}
u(\bm{x})&=q(0;u,\bm{x}, \bm{X}_0)=\mathbb{E}\left[q\left({\Delta t};u,\bm{x}, \{\bm{X}_s\}_{0\leq s\leq {\Delta t}} \right) | \bm{X}_0=\bm{x}\right] \\
&= \mathbb{E}\left[u(\bm{X}_{\Delta t}) - \int_{0}^{{\Delta t}}G(\bm{X}_s, u(\bm{X}_s)) ds \middle |\bm{X}_0=\bm{x}  \right],~\forall\bm{x}\in \Omega, \forall {\Delta t}>0.
\end{split}
\end{align}
\end{itemize}
Regarding to the definition of stochastic process $q(t;u, \bm{x},\{\bm{X}_s\}_{0\leq s\leq {\Delta t}})$, the infinitesimal drift $d(\cdot)$ of the stochastic process $u(\bm{X}_s)$ is connected to the differential operator $\mathcal{N}[u]$ as 
\begin{equation}\label{eq:u_x_ito}
d(u(\bm{X}_s)) = \left(\mathcal{N}[u](\bm{X}_s) + G(\bm{X}_s, u(\bm{X}_s)\right) ds + \nabla u (\bm{X}_s) \cdot d\bm{B}_s.
\end{equation}
The martingale property, Eq.~\eqref{eq:q-martingale}, shows that the solution at a point $\bm{x}$, $u(\bm{X})$ can be represented through its neighborhood statistics observed by the stochastic process $\bm{X}_t$ starting at the point $\bm{x}$ during the time period $[0, {\Delta t}]$. We note that the representation holds for an arbitrary time ${\Delta t}>0$ and any stopping time $\tau$ by the optional stopping theorem \cite{karatzas}. In particular, the exit time from the domain as the stopping time, $\tau=\inf \{s: \bm{X}_s \notin \Omega \}$, induces the well-known Feynman-Kac formula for the PDE \cite{oksendal}. Note that other methods are based on the classical Monte-Carlo of the Feynman-Kac formula \cite{classicFeynman1, classicFeynman2,classicFeynman3, classicFeynman4}. Such methods estimate the solution of a PDE at an individual point independently with the realizations of the stochastic processes $\bm{X}_s$ until it exits from the given domain. DFLM, on the other hand, approximates the PDE solution over the domain at once through a neural network $u(\bm{x};\bm{\theta})$, which is trained to satisfy the martingale property Eq.~\eqref{eq:q-martingale}. In particular, DFLM considers a short period, say $\Delta t$, rather than waiting for whole complete trajectories until $\bm{X}_s$ is out of the domain. This character of DFLM allows a neural network to learn more frequently for a short time period.

DFLM constructs the loss function for training a neural network as
\begin{align}\label{eq:q_loss_continuous}
\mathcal{L}^{\Omega}(\bm{\theta})&=\mathbb{E}_{\bm{x}\sim \Omega}\left[\left|u(\bm{x};\bm{\theta})-\mathbb{E}\left[q\left(\Delta t;u(\cdot;\bm{\theta}),\bm{x}, \{\bm{X}_s\}_{0\leq s\leq \Delta t}\right) | \bm{X}_0=\bm{x}\right] \right|^2\right]\\
&=\mathbb{E}_{\bm{x}\sim \Omega} \left[\left|u(\bm{x};\bm{\theta}) - \mathbb{E}_{\{\bm{X}_s\}_{0\leq s\leq \Delta t}}\left[u(\bm{X}_{\Delta t};\bm{\theta}) - \int_{0}^{\Delta t}G(\bm{X}_s, u(\bm{X}_s;\bm{\theta})) ds \middle |\bm{X}_0=\bm{x}  \right] \right|^2 \right]
\end{align} 
where the outer expection is over the sample collocation point $\bm{x}$ in the domain $\Omega$ and the inner expectation is over the stochastic path $\bm{X}_s$ starting at  $\bm{X}_0=\bm{x}$ during $[0,\Delta t]$. 
In the presence of the drive term $\bm{V}$ that can depend on $u$, the statistics of $\bm{V}$ will be nontrivial and thus a numerical approximation to $\bm{X}_s$ must be calculated by solving Eq.~\eqref{eq:q_stochastic_walkers}. As an alternative to avoid the calculation of the solution to Eq.~\eqref{eq:q_stochastic_walkers}, another martingale process $\tilde{q}({\Delta t};u,\bm{x},\{\bm{B}_s\}_{0\leq s\leq {\Delta t}})$ based on the standard Brownian motion $\bm{B}_s$ is proposed as 
\begin{align}\label{eq:q_tilde_martingale_process}
\tilde{q}({\Delta t};u, \bm{x}&,\{\bm{B}_s\}_{0\leq s\leq {\Delta t}}) := 
 \left(u(\bm{B}_{\Delta t}) - \int_0^{\Delta t} G(\bm{B}_s,u(\bm{B}_s))ds\right) \mathcal{D}(\bm{V},u,{\Delta t}),\\
&\text{where}{\quad}\mathcal{D}(\bm{V},u,{\Delta t})=\exp \biggl(\int^{\Delta t}_0\bm{V}(\bm{B}_s, u(\bm{B}_s))\cdot d\bm{B}_s-\frac{1}{2}\int^{\Delta t}_{0}|\bm{V}(\bm{B}_s, u(\bm{B}_s))|^2 ds \biggr). \nonumber 
\end{align}
Here, $\tilde{q}$-process is of form replacing $\bm{X}_s$ to $\bm{B}_s$ in $q$-process with additional exponential factor $\mathcal{D}(\bm{V}, u,{\Delta t})$ compensating the removal of the drift effect in $\bm{X}_s$ \cite{karatzas, oksendal}. Using the alternative $\tilde{q}$-martingale allows the standard Brownian walkers to explore the domain regardless of the form of the given PDE, which can be drawn from the standard Gaussian distribution without solving SDEs. The alternative loss function corresponding to $\tilde{q}$-martingale is 
\begin{equation}
\mathcal{L}^{\Omega}(\bm{\theta})=\mathbb{E}_{\bm{x}\sim \Omega}\left[\left|u(\bm{x};\bm{\theta})-\mathbb{E}\left[\tilde{q}({\Delta t};u(\cdot;\bm{\theta}),\bm{x}, \{\bm{B}_s\}_{0\leq s\leq \Delta t}) | \bm{B}_0=\bm{x}\right] \right|^2\right].
\end{equation}

In the standard DFLM \cite{DFLM}, for the Dirichlet boundary condition, $u(\bm{x})=g(\bm{x})$ on $\partial \Omega$, we consider $\bm{X}_t$ to be absorbed to the boundary $\partial \Omega$ at the exit position and the value of the neural network is replaced by the given boundary value at the exit position, which makes the information propagate from the boundary into the domain's interior. In this study, to enhance the constraint on the boundary, we add the following boundary loss term 
\begin{equation}
\mathcal{L}^{\partial \Omega}(\bm{\theta}) = \mathbb{E}_{\bm{x}\sim \partial \Omega}\left[|u(\bm{x};\bm{\theta})-g(\bm{x})|^2 \right]
\end{equation}
in the total loss
\begin{equation}
{\mathcal{L}}(\bm{\theta})={\mathcal{L}}^{\Omega}(\bm{\theta}) + {\mathcal{L}}^{\partial\Omega}(\bm{\theta}).
\end{equation}
The loss function ${\mathcal{L}}(\bm{\theta})$ is optimized by a stochastic gradient descent method, and, in particular, the bootstrapping approach is used as the target of the neural network (i.e., the expectation component of $q$- or $\tilde{q}$-process) is pre-evaluated using the current state of neural network parameters $\bm{\theta}$. The $n$-th iteration step for updating the parameters $\bm{\theta}_n$ is 
\begin{equation}\label{eq:sgd_update}
\bm{\theta}_n = \bm{\theta}_{n-1} - \alpha \nabla\widetilde{\mathcal{L}}_n(\bm{\theta}_{n-1}), {\quad}\text{where}{\quad} \widetilde{\mathcal{L}}_n(\bm{\theta})=\widetilde{\mathcal{L}}_n^{\Omega}(\bm{\theta}) + \widetilde{\mathcal{L}}^{\partial\Omega}(\bm{\theta}).
\end{equation}
Here, the term $\widetilde{\mathcal{L}}^{\Omega}(\bm{\theta})$ is the empirical interior loss function using $N_r$ sample collocation points $\{\bm{x}_i\}_{i=1}^{N_r}$ in the interior of the domain $\Omega$, and $N_s$ stochastic walkers $\left\{\bm{X}_s^{(i,j)};s \in [0,\Delta t],  \bm{X}_0=\bm{x}_i\right \}_{j=1}^{N_s}$ at each sample collocation point $\bm{x}_i$, 
\begin{align}\label{eq:q_loss_discrete}
\widetilde{\mathcal{L}}^{\Omega}_{n}(\bm{\theta}) :&= \widetilde{\mathbb{E}}_{\bm{x}\sim \Omega}\left[\left|u(\bm{x};\bm{\theta})-\widetilde{\mathbb{E}}\left[q\left(\Delta t;u(\cdot;\bm{\theta}_{n-1}),\bm{x}, \{\bm{X}_s\}_{0\leq s\leq \Delta t}\right) | \bm{X}_0=\bm{x}\right] \right|^2\right] \\
&= \frac{1}{N_r} \sum \limits_{i=1}^{N_r}\left| u(\bm{x}_i;\bm{\theta})-\frac{1}{N_s}\sum\limits_{j=1}^{N_s}\left\{ u\left(\bm{X}_{\Delta t}^{(i,j)};\bm{\theta}_{n-1} \right)-\int_0^{\Delta t}G\left(\bm{X}_s^{(i,j)}, u\left(\bm{X}_{\Delta t}^{(i,j)};\bm{\theta}_{n-1} \right)\right) ds \right\} \right |^2.
\end{align}
The other term $\widetilde{\mathcal{L}}^{\partial \Omega}(\bm{\theta})$ is the empirical boundary loss function using $N_b$ random boundary collocation points $\{\bm{x}_l\}_{l=1}^{N_b}$ on $\partial \Omega$
\begin{equation}
\widetilde{\mathcal{L}}^{\partial \Omega}(\bm{\theta}) := \widetilde{\mathbb{E}}_{\bm{x}\sim \partial \Omega}\left[|u(\bm{x};\bm{\theta})-g(\bm{x})|^2 \right] = \frac{1}{N_b}\sum \limits_{l=1}^{N_b} \left| u(\bm{x}_l;\bm{\theta}) - g(\bm{x}_l)\right|^2.
\end{equation}

The random interior and boundary collocation points $\{\bm{x}_i\}_{i=1}^{N_r}$ and $\{\bm{x}_l\}_{l=1}^{N_b}$ can follow a distribution whose support covers the domain $\Omega$ and the boundary $\partial \Omega$, respectively. The learning rate $\alpha$ could be tuned at each step and the gradient descent step can be optimized by considering the previous steps, such as Adam optimization \cite{adam}.


\section{Analysis}\label{sec:analysis}
DFLM employs stochastic walkers in the local neighborhoods of collocation points distributed across the domain. Our primary goal in this section is to understand how these walkers influence the training of the neural network. The information incorporated into the network during each iteration is governed by two key parameters: (i) the time interval $\Delta t$ and (ii) the number of stochastic walkers $N_s$. These parameters determine how broadly (via $\Delta t$) and how densely (via $N_s$) the walkers explore the neighborhood of each collocation point (see Fig.~\ref{fig:sampling_diagram}). We demonstrate that the network’s ability to approximate the PDE solution is affected by the bias of the empirical loss function, which depends on both $\Delta t$ and $N_s$. In addition, we show that DFLM requires sufficiently rich neighborhood exploration by the stochastic walkers to capture the local variability of the solution. This leads to the existence of a problem-dependent lower bound $\Delta t^{\ast}$ on the time interval $\Delta t$, below which effective training cannot be guaranteed.

\subsection{Bias in the empirical martingale loss function}

\begin{thm}\label{thm:lossbias} 
The empirical loss $\widetilde{\mathcal{L}}^{\Omega}(\bm{\theta})$ in Eq.~\eqref{eq:q_loss_discrete} is a biased estimator of the exact martingale loss $\mathcal{L}^{\Omega}(\bm{\theta})$ in Eq.~\eqref{eq:q_loss_continuous}. Moreover, when $u(\cdot;\bm{\theta})$ has a small PDE residual $\mathcal{N}[u(\cdot;\bm{\theta})]$ in Eq.~\eqref{eq:Quasi-linear elliptic}, the bias is proprotional to $\Delta t$ and the $\mathcal{L}^2$-norm of $\nabla_{\bm{x}}u(\cdot;\bm{\theta})$ with respect to the sampling measure $\bm{x}\sim \Omega$, while the bias is inversely proportional to $N_s$
\begin{equation}
\normalfont{\mbox{Bias}}_{\mathcal{L}^{\Omega}}\left[\widetilde{\mathcal{L}}^{\Omega} \right]{\quad}\propto{\quad} \frac{\Delta t}{N_s}\mathbb{E}_{\bm{x}\sim \Omega}\left[|\nabla_{\bm{x}} u(\bm{x};\bm{\theta})|^2\right].
\end{equation}
\end{thm}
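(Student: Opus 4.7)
The plan is to begin by taking the joint expectation of $\widetilde{\mathcal{L}}^{\Omega}(\bm{\theta})$ over the sampled collocation points $\{\bm{x}_i\}$ and the walker paths $\{\bm{X}_s^{(i,j)}\}$, and to exploit the standard bias-variance identity for a Monte-Carlo inner average. Writing $\mu(\bm{x}) := \mathbb{E}[q(\Delta t; u(\cdot;\bm{\theta}),\bm{x},\{\bm{X}_s\}) \mid \bm{X}_0=\bm{x}]$ and its $N_s$-walker estimator $\hat{\mu}(\bm{x})$, the identity $\mathbb{E}[|a-\hat{\mu}|^2] = |a-\mu|^2 + \mathrm{Var}[\hat{\mu}]$ applied with the deterministic $a = u(\bm{x};\bm{\theta})$ yields the clean decomposition
\[
\mathbb{E}[\widetilde{\mathcal{L}}^{\Omega}(\bm{\theta})] \;=\; \mathcal{L}^{\Omega}(\bm{\theta}) \;+\; \frac{1}{N_s}\,\mathbb{E}_{\bm{x}\sim\Omega}\bigl[\sigma^2(\bm{x};\bm{\theta})\bigr],
\]
where $\sigma^2(\bm{x};\bm{\theta}) := \mathrm{Var}[q(\Delta t;u(\cdot;\bm{\theta}),\bm{x},\{\bm{X}_s\})]$. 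This immediately shows that $\widetilde{\mathcal{L}}^{\Omega}$ is biased and reduces the task to extracting the leading $\Delta t$ behavior of $\sigma^2$.

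Next I would apply the Itô decomposition Eq.~\eqref{eq:u_x_ito} to $u(\bm{X}_t;\bm{\theta})$, substitute into Eq.~\eqref{eq:q_martingale_stochastic_process}, and note that the $G$ terms cancel exactly, producing
\[
q(\Delta t;u,\bm{x},\{\bm{X}_s\}) \;=\; u(\bm{x};\bm{\theta}) \;+\; \int_0^{\Delta t}\mathcal{N}[u](\bm{X}_s;\bm{\theta})\,ds \;+\; \int_0^{\Delta t}\nabla u(\bm{X}_s;\bm{\theta})\cdot d\bm{B}_s.
\]
The first summand is $\bm{x}$-measurable and drops out of $\sigma^2$. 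The variance of the Itô integral, by Itô's isometry, equals $\mathbb{E}\bigl[\int_0^{\Delta t}|\nabla u(\bm{X}_s;\bm{\theta})|^2\,ds\bigr]$; the variance of the Lebesgue integral is bounded by $\Delta t^{2}\,\mathbb{E}[|\mathcal{N}[u]|^2]$, which is subdominant under the small-residual hypothesis; and the cross covariance, after Cauchy--Schwarz and Itô isometry, is $O(\Delta t^{3/2}\,\|\mathcal{N}[u]\|\,\|\nabla u\|)$ and likewise subdominant.

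For the leading term I would Taylor-expand $|\nabla u(\bm{X}_s;\bm{\theta})|^2$ about $\bm{x}$, using the standard moment estimate $\mathbb{E}[|\bm{X}_s - \bm{x}|^2] = O(s)$ for the SDE Eq.~\eqref{eq:q_stochastic_walkers}, to conclude $\mathbb{E}\bigl[\int_0^{\Delta t}|\nabla u(\bm{X}_s;\bm{\theta})|^2\,ds\bigr] = \Delta t\,|\nabla u(\bm{x};\bm{\theta})|^2 + O(\Delta t^{2})$. Pulling the outer expectation over $\bm{x}\sim\Omega$ through then yields the claimed $\frac{\Delta t}{N_s}\mathbb{E}_{\bm{x}\sim\Omega}\bigl[|\nabla_{\bm{x}} u(\bm{x};\bm{\theta})|^2\bigr]$ at leading order.

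The main obstacle I anticipate is a clean accounting of the subleading contributions: the $O(\Delta t^{2})$ Taylor remainder in $|\nabla u|^2$, the $O(\Delta t^{2}\|\mathcal{N}[u]\|^2)$ drift-variance term, and the $O(\Delta t^{3/2})$ cross term must all be shown genuinely smaller than $\Delta t\,|\nabla u|^2$ in the regime of interest. This forces explicit (but mild) assumptions — uniform bounds on $\nabla u(\cdot;\bm{\theta})$, $\nabla^2 u(\cdot;\bm{\theta})$, and $\mathcal{N}[u(\cdot;\bm{\theta})]$ over $\Omega$, together with a quantitative smallness hypothesis on the PDE residual relative to $\Delta t$ — which I would state upfront and then use to turn the informal "$\propto$" into a precise leading-order asymptotic.
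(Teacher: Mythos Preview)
Your proposal is correct and follows essentially the same route as the paper: the bias-variance decomposition of the empirical loss to isolate $\mathbb{E}_{\bm{x}}[\mathrm{Var}(\hat{\mu}(\bm{x}))]$, the It\^o representation $dq = \mathcal{N}[u]\,dt + \nabla u\cdot d\bm{B}_t$ (with the $G$ terms cancelling), It\^o isometry for the variance of the stochastic integral, and the small-$\Delta t$ approximation $\mathbb{E}\int_0^{\Delta t}|\nabla u(\bm{X}_s)|^2\,ds \simeq |\nabla u(\bm{x})|^2\Delta t$. Your treatment is in fact slightly more systematic than the paper's, which simply writes ``$\simeq$'' and drops the residual term without explicitly bounding the $O(\Delta t^{3/2})$ cross term or the $O(\Delta t^2)$ contributions you identify.
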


\begin{proof}\renewcommand{\qedsymbol}{}
For notational simplicity, for a fixed $\Delta t$, we use $y_{\bm{x}}$ for the target random variable 
\begin{equation}
y_{\bm{x}}:=q\left(\Delta t;u(\cdot;\bm{\theta}),\bm{x}, \{\bm{X}_s\}_{0\leq s\leq \Delta t}\right)
\end{equation}
where the subscript $\bm{x}$ describes for the intial value of the stochastic process $\bm{X}_0=\bm{x}$.
We also denote the unbiased sample mean statistic for the target as $\overline{y_{\bm{x}}}$
\begin{equation}
\overline{y_{\bm{x}}}:=\mathbb{E}\left[q\left(\Delta t;u(\cdot;\bm{\theta}_{n-1}),\bm{x}, \{\bm{X}_s\}_{0\leq s\leq \Delta t}\right) | \bm{X}_0=\bm{x}\right].
\end{equation}
We denote the sampling measure of $\bm{x}$ as $\mathbb{P}(\bm{x})$ and the distribution of $\overline{y_{\bm{x}}}$ conditioned on $\bm{x}$ as $\mathbb{P}_{\bm{\theta}}(\overline{y_{\bm{x}}}|\bm{x})$ where the subcript $\bm{\theta}$ corresponds to the dependency of the distribution on the neural network's state.
We now take the expectation of the empirical loss with respect to $\bm{x}$, which yields
\begin{align}
\mathbb{E}\left[\widetilde{\mathcal{L}}^{\Omega}(\bm{\theta})\right] &= \int_{\bm{x}}\left(\int_{\overline{y_{\bm{x}}}}(u(\bm{x};\bm{\theta})-\overline{y_{\bm{x}}})^2 \mathbb{P}_{\bm{\theta}}(\overline{y_{\bm{x}}}|\bm{x}) d\overline{y_{\bm{x}}} \right)\mathbb{P}(\bm{x})d\bm{x} \\
&=\int_{\bm{x}}u^2(\bm{x};\bm{\theta})\mathbb{P}(\bm{x})d\bm{x}-\int_{\bm{x}}2u(\bm{x};\bm{\theta})\mathbb{E}_{\mathbb{P}_{\bm{\theta}}(\cdot|\bm{x})}[\overline{y_{\bm{x}}}]\mathbb{P}(\bm{x})d\bm{x} + \int_{\bm{x}}\mathbb{E}_{\mathbb{P}_{\bm{\theta}}(\cdot|\bm{x})}\left[\overline{y_{\bm{x}}}^2 \right]\mathbb{P}(\bm{x})d\bm{x} \\
&= \int_{\bm{x}}\left(u(\bm{x};\bm{\theta})-\mathbb{E}_{\mathbb{P}_{\bm{\theta}}(\cdot|\bm{x})}[\overline{y_{\bm{x}}}]\right)^2\mathbb{P}(\bm{x})d\bm{x} + \int_{\bm{x}}\left(\mathbb{E}_{\mathbb{P}_{\bm{\theta}}(\cdot|\bm{x})}\left[\overline{y_{\bm{x}}}^2\right]-\mathbb{E}_{\mathbb{P}_{\bm{\theta}}(\cdot|\bm{x})}[\overline{y_{\bm{x}}}]^2 \right) \mathbb{P}(\bm{x})d\bm{x} \\
&= \mathbb{E}_{\bm{x}\sim\mathbb{P}}\left[\left(u(\bm{x};\bm{\theta})-\mathbb{E}_{\mathbb{P}_{\bm{\theta}}(\cdot|\bm{x})}[\overline{y_{\bm{x}}}]\right)^2 \right] + \mathbb{E}_{\bm{x}\sim\mathbb{P}}\left[ \mathbb{V}_{\mathbb{P}_{\bm{\theta}}(\cdot|\bm{x})}(\overline{y_{\bm{x}}})\right]\\
&= \mathcal{L}^{\Omega}(\bm{\theta}) + \mathbb{E}_{\bm{x}\sim\mathbb{P}}\left[ \mathbb{V}_{\mathbb{P}_{\bm{\theta}}(\cdot|\bm{x})}(\overline{y_{\bm{x}}})\right],
\end{align}
which implies that the empirical loss $\widetilde{\mathcal{L}}^{\Omega}(\bm{\theta})$ estimates the exact martingale loss $\mathcal{L}^{\Omega}(\bm{\theta})$ with the bias $\mathbb{E}_{\bm{x}\sim\mathbb{P}}\left[ \mathbb{V}_{\mathbb{P}_{\bm{\theta}}(\cdot|\bm{x})}(\overline{y_{\bm{x}}})\right]$. 

When the PDE residual of $u(\cdot;\bm{\theta})$, $\mathcal{N}[u(\cdot;\bm{\theta})]$, is sufficiently small, the stochastic process $q(\Delta t;$ $u(\cdot;\bm{\theta}),\bm{x}, \{\bm{X}_s\}_{0\leq s\leq \Delta t})$ corresponding to $y_{\bm{x}}$ can be approximated as follows
\begin{align}
d(q\left(\Delta t;u(\cdot;\bm{\theta}),\bm{x}, \{\bm{X}_s\}_{0\leq s\leq \Delta t}\right)) &= (\mathcal{N}[u](\bm{X}_s)) ds + \nabla_{\bm{x}} u (\bm{X}_s;\bm{\theta}) \cdot d\bm{B}_s \\
&\simeq  \nabla_{\bm{x}} u (\bm{X}_s;\bm{\theta}) \cdot d\bm{B}_s.
\end{align}
The variance of $y_{\bm{x}}$ is 
\begin{align}
\mathbb{V}[y_{\bm{x}}] &= \mathbb{V}\left[\int_0^{\Delta t} \nabla_{\bm{x}} u(\bm{X}_s;\bm{\theta})\cdot d\bm{B}_s \middle | \bm{X}_0=\bm{x} \right] \\
&= \mathbb{E}\left[\left(\int_0^{\Delta t} \nabla_{\bm{x}} u(\bm{X}_s;\bm{\theta})\cdot d\bm{B}_s \right)^2 \middle | \bm{X}_0=\bm{x} \right] \\
&= \mathbb{E}\left[\int_0^{\Delta t} |\nabla_{\bm{x}} u(\bm{X}_s;\bm{\theta})|^2 ds  \middle | \bm{X}_0=\bm{x} \right] {\quad}{\quad}~(\because \text{It$\hat{\text{o}}$ isometry}) \\
& \simeq \left |\nabla_{\bm{x}} u (\bm{x};\bm{\theta}) \right|^2\Delta t \hspace{5mm}( \Delta t \ll 1)
\end{align}
Therefore the variance of the sample mean $\overline{y_{\bm{x}}}$ of $y_{\bm{x}}$ is approximated as
\begin{equation}\label{eq:each_target_sample_mean_variance}
\mathbb{V}[\overline{y_{\bm{x}}}] = \frac{1}{N_s}\mathbb{V}[y_{\bm{x}}] \simeq \frac{\left |\nabla_{\bm{x}} u (\bm{x};\bm{\theta}) \right|^2\Delta t}{N_s}.
\end{equation}
By taking the expectation over the sampling measure,  the bias of the empirical loss $\widetilde{\mathcal{L}}^{\Omega}(\bm{\theta})$ is 
\begin{equation}
\normalfont{\mbox{Bias}}_{\mathcal{L}^{\Omega}}\left[\widetilde{\mathcal{L}}^{\Omega} \right]{\quad} \simeq {\quad}\frac{\Delta t}{N_s} \mathbb{E}_{\bm{x}\sim \Omega}\left[| \nabla_{\bm{x}} u(\bm{x};\bm{\theta}) |^2 \right].
\end{equation}
\end{proof}

Theorem~\ref{thm:lossbias} indicates that neural network optimization with the empirical loss is implicitly regularized by the variance of the target samples, which can hinder convergence toward satisfying the martingale property. As the network approximation approaches the PDE solution, the target variance at each collocation point becomes largely determined by the local topology of the neural network: regions with larger gradient magnitudes induce higher target-sample variance. This variance is further controlled by two key parameters, which govern its overall scale.


When the time interval $\Delta t$ increases, each stochastic walker explores a broader neighborhood and gathers information over a larger region. While this expanded observation enriches the representation of the solution, it also amplifies the bias in the empirical loss. Mitigating this bias requires employing a sufficiently large number of walkers ($N_s$). Conversely, reducing $\Delta t$ lowers the bias so that fewer walkers are needed to achieve a comparable bias level. However, a smaller $\Delta t$ also restricts neighborhood exploration, thereby limiting the information available to the network during training.

From the variance estimation of the mean statistic in Eq.~\eqref{eq:each_target_sample_mean_variance}, we quantify the uncertainty of the empirical target value at each point $\bm{x}$ through the Chebyshev's inequality as 
\begin{equation}
\forall \epsilon >0, \hspace{3mm} \mathbb{P}\left(\left|\overline{y_{\bm{x}}}- \mathbb{E}\left[\overline{y_{\bm{x}}} \right] \right| > \epsilon \right) \lesssim \frac{|\nabla_{\bm{x}} u(\bm{x};\bm{\theta})|^2\Delta t}{\epsilon^2 N_s}. 
\end{equation}

\begin{cor}
The empirical loss $\widetilde{\mathcal{L}}^{\Omega}(\bm{\theta})$ in Eq.~\eqref{eq:q_loss_discrete} is an asymptotically unbiased estimator of the exact martingale loss $\mathcal{L}^{\Omega}(\bm{\theta})$ in Eq.~\eqref{eq:q_loss_continuous} with respect to both the {time interval} (i.e., $\Delta t \rightarrow 0$) and the number of stochastic walkers (i.e., $N_s \rightarrow \infty$).
\end{cor}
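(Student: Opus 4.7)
The corollary is essentially a direct consequence of the bias characterization proved in Theorem~\ref{thm:lossbias}, so the plan is to reuse the machinery already set up and simply take limits. I would start from the identity derived in the first part of the proof of Theorem~\ref{thm:lossbias},
\begin{equation*}
\mathbb{E}\left[\widetilde{\mathcal{L}}^{\Omega}(\bm{\theta})\right] = \mathcal{L}^{\Omega}(\bm{\theta}) + \mathbb{E}_{\bm{x}\sim\mathbb{P}}\left[\mathbb{V}_{\mathbb{P}_{\bm{\theta}}(\cdot|\bm{x})}(\overline{y_{\bm{x}}})\right],
\end{equation*}
which is the exact (unapproximated) form of the bias and does not require the small-residual assumption. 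The task then reduces to showing that the right-hand expectation vanishes in each of the two limiting regimes.

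For the $N_s \to \infty$ regime, I would use $\mathbb{V}(\overline{y_{\bm{x}}}) = \frac{1}{N_s}\mathbb{V}(y_{\bm{x}})$, so provided $\mathbb{V}(y_{\bm{x}})$ is finite uniformly in $\bm{x}$ (which follows from the $L^2$ regularity of $u(\cdot;\bm\theta)$ and $G$ on $\Omega$ together with a standard moment estimate for the SDE \eqref{eq:q_stochastic_walkers}), the bias is bounded by $C/N_s$ and thus vanishes. For the $\Delta t \to 0$ regime, I would use the Itô decomposition $d(u(\bm{X}_t;\bm\theta)) = (\mathcal{N}[u]+G)\,dt + \nabla_{\bm{x}}u\cdot d\bm{B}_t$, which yields $y_{\bm{x}} - u(\bm{x};\bm\theta) = \int_0^{\Delta t}\mathcal{N}[u](\bm{X}_s;\bm\theta)\,ds + \int_0^{\Delta t}\nabla_{\bm{x}}u(\bm{X}_s;\bm\theta)\cdot d\bm{B}_s$. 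The first (Lebesgue) integral contributes $O((\Delta t)^2)$ to the variance, while Itô isometry on the second gives $O(\Delta t)$, so $\mathbb{V}(y_{\bm{x}}) = O(\Delta t)$ uniformly in $\bm{x}$ and the bias is $O(\Delta t/N_s)$.

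Combining the two estimates and applying dominated convergence (justified by the aforementioned uniform bound on $\mathbb{V}(y_{\bm{x}})$ and the fact that $\mathbb{P}$ is a probability measure on $\Omega$) completes the argument in each limit. The only subtle point — and therefore the main item requiring care — is establishing that $\mathbb{V}(y_{\bm{x}})$ is uniformly controlled so that the limit can be exchanged with the outer expectation over $\bm{x}\sim\mathbb{P}$; everything else is mechanical, with the bias estimate of Theorem~\ref{thm:lossbias} already doing the heavy lifting.
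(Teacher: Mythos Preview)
Your proposal is correct and follows exactly the route the paper intends: the corollary is stated in the paper without proof, as an immediate consequence of the bias identity and variance estimate in Theorem~\ref{thm:lossbias}, and your argument simply makes that explicit by reusing $\mathbb{E}[\widetilde{\mathcal{L}}^{\Omega}]=\mathcal{L}^{\Omega}+\mathbb{E}_{\bm{x}}[\mathbb{V}(\overline{y_{\bm{x}}})]$ together with $\mathbb{V}(\overline{y_{\bm{x}}})=\mathbb{V}(y_{\bm{x}})/N_s$ and the $O(\Delta t)$ bound from the It\^o isometry. Your additional care about uniform control of $\mathbb{V}(y_{\bm{x}})$ and dominated convergence goes beyond what the paper spells out, but is consistent with its approach rather than a different one.
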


\subsection{Existence of a lower bound for the {time interval} of the Feynman-Kac formulation}
We now focus on the trainability issue for a small $\Delta t$. For a $k$-dimensional vector $\bm{\mu}\in \mathbb{R}^{k}$ and a positive value $\sigma^2 \in \mathbb{R}^{+}$, we detnote $f_{\bm{\mu}, \sigma^2}$ as the probability density function (PDF) of multivariate normal distribution with mean $\bm{\mu}$ and covariance matrix $\sigma^2 \mathbb{I}$ where $\mathbb{I}$ is the identity matrix in $\mathbb{R}^{k\times k}$. 
Hereafter, for notational simplicity, we suppress the dependence of the advection and force terms on $u(\bm{x})$, that is, $\bm{V}(\bm{x})=\bm{V}(\bm{x},u(\bm{x}))$ and $G(\bm{x})=G(\bm{x},u(\bm{x}))$. We first need the following lemma to analyze the trainability issue concerning $\Delta t$.

\begin{lem}
The numerical target evaluation using $\tilde{q}$-martingale in Eq.~\eqref{eq:q_tilde_martingale_process} for a small {time interval} $\Delta t$ is decomposed into a convolution with a normal distribution and the force effect 
\begin{equation}\label{eq:qtilde_target_eval}
\mathbb{E}[\tilde{q}(\Delta t; u, \bm{x},\bm{B}_{\Delta t})]=(u * f_{\bm{V}(\bm{x})\Delta t, \Delta t})(\bm{x}) -G(\bm{x})\Delta t.
\end{equation}
\end{lem}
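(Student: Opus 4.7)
The plan is to exploit the smallness of $\Delta t$ to reduce the Girsanov-type exponential $\mathcal{D}(\bm{V},u,\Delta t)$ and the force integral to explicit functions of $\bm{B}_{\Delta t}-\bm{x}$, and then evaluate the resulting Gaussian expectation in closed form. Concretely, I would first freeze the coefficients at $\bm{x}$. Since $\bm{B}_s-\bm{x}=O(\sqrt{\Delta t})$ on $[0,\Delta t]$, assuming mild regularity of $\bm{V}$ and $G$ one has $\bm{V}(\bm{B}_s)=\bm{V}(\bm{x})+O(\sqrt{\Delta t})$ and $G(\bm{B}_s)=G(\bm{x})+O(\sqrt{\Delta t})$, so to leading order
\begin{align*}
\int_0^{\Delta t}\bm{V}(\bm{B}_s)\cdot d\bm{B}_s &\approx \bm{V}(\bm{x})\cdot(\bm{B}_{\Delta t}-\bm{x}),\\
\tfrac{1}{2}\int_0^{\Delta t}|\bm{V}(\bm{B}_s)|^2 ds &\approx \tfrac{1}{2}|\bm{V}(\bm{x})|^2\Delta t,\\
\int_0^{\Delta t}G(\bm{B}_s)ds &\approx G(\bm{x})\Delta t.
\end{align*}
Substituting these into \eqref{eq:q_tilde_martingale_process} gives
\begin{equation*}
\tilde{q}(\Delta t;u,\bm{x},\bm{B}_{\Delta t})\approx \bigl(u(\bm{B}_{\Delta t})-G(\bm{x})\Delta t\bigr)\exp\!\Bigl(\bm{V}(\bm{x})\cdot(\bm{B}_{\Delta t}-\bm{x})-\tfrac{1}{2}|\bm{V}(\bm{x})|^2\Delta t\Bigr).
\end{equation*}

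Next I would take the expectation using that, conditional on $\bm{B}_0=\bm{x}$, the endpoint $\bm{B}_{\Delta t}$ is distributed as $\mathcal{N}(\bm{x},\Delta t\,\mathbb{I})$, so its density is $f_{\bm{x},\Delta t}$. For the $u$-part, I write out
\begin{equation*}
\int_{\mathbb{R}^k} u(\bm{y})\exp\!\Bigl(\bm{V}(\bm{x})\cdot(\bm{y}-\bm{x})-\tfrac{1}{2}|\bm{V}(\bm{x})|^2\Delta t\Bigr)\,f_{\bm{x},\Delta t}(\bm{y})\,d\bm{y},
\end{equation*}
and combine the two exponents; completing the square in $\bm{y}$ converts the joint exponent into $-\frac{1}{2\Delta t}|\bm{y}-\bm{x}-\bm{V}(\bm{x})\Delta t|^2$, i.e.\ the Gaussian density with mean $\bm{x}+\bm{V}(\bm{x})\Delta t$ and variance $\Delta t\,\mathbb{I}$. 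A direct rewriting of the resulting integral against $\bm{z}=\bm{y}-\bm{x}$ identifies it with $(u * f_{-\bm{V}(\bm{x})\Delta t,\Delta t})(\bm{x})$, which is exactly the first term of \eqref{eq:qtilde_target_eval}. For the $G$-part, the factor $G(\bm{x})\Delta t$ pulls out and one is left with the expectation of the Girsanov exponential alone; by the Gaussian moment generating function, $\mathbb{E}[\exp(\bm{V}(\bm{x})\cdot(\bm{B}_{\Delta t}-\bm{x}))]=\exp(\tfrac{1}{2}|\bm{V}(\bm{x})|^2\Delta t)$, which exactly cancels the deterministic $-\tfrac{1}{2}|\bm{V}(\bm{x})|^2\Delta t$, so this term contributes simply $-G(\bm{x})\Delta t$.

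The main obstacle is conceptual rather than computational: justifying the frozen-coefficient step for the Itô integral $\int_0^{\Delta t}\bm{V}(\bm{B}_s)\cdot d\bm{B}_s$, since one cannot literally pull $\bm{V}$ out of a stochastic integral. The clean way is to bound the remainder $\int_0^{\Delta t}(\bm{V}(\bm{B}_s)-\bm{V}(\bm{x}))\cdot d\bm{B}_s$ by the Itô isometry together with Lipschitz continuity of $\bm{V}$, which gives a mean-square error of order $\Delta t$, higher than the $O(\sqrt{\Delta t})$ of the principal martingale term; similar Taylor-style bounds handle the $|\bm{V}|^2$ and $G$ Riemann integrals. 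After this, all remaining steps are clean Gaussian calculus.
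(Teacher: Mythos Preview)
Your proposal is correct and follows essentially the same route as the paper: freeze the coefficients at $\bm{x}$ for small $\Delta t$, write the expectation against the Gaussian density $f_{\bm{x},\Delta t}$, and complete the square in the exponent to identify the convolution with $f_{-\bm{V}(\bm{x})\Delta t,\Delta t}$. If anything, you are slightly more careful than the paper, which simply asserts the frozen-coefficient approximation without the It\^o isometry/Lipschitz remainder bound you outline.
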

\begin{proof}\renewcommand{\qedsymbol}{}
For a small {time interval} $\Delta t$, we consider the approximation of the stochastic integrals associated with the $\tilde{q}$-martingale as follows:
\begin{equation}
\mathbb{E}[\tilde{q}(\Delta t; u, \bm{x},\bm{B}_{\Delta t})]= \mathbb{E}\left[\left(u(\bm{B}_{\Delta t}) -G(\bm{x})\Delta t\right)\exp\left(\bm{V}(\bm{x}))\cdot \Delta\bm{B}_{\Delta t} -\frac{1}{2}|\bm{V}(\bm{x}))|^2\Delta t \right)\middle |\bm{B}_0=\bm{x} \right].
\end{equation}
Since the standard Brownian motion follows $\bm{B}_{\Delta t} \sim \mathcal{N}(\bm{x}, \Delta t\mathbb{I})$,
\begin{align}
\mathbb{E}[\tilde{q}(\Delta t; u, \bm{x},\bm{B}_{\Delta t})]&=\int_{\bm{y}\in\mathbb{R}^{d}}(u(\bm{y})-G(\bm{x})\Delta t)\exp\left(\bm{V}(\bm{x})\cdot (\bm{y}-\bm{x})-\frac{1}{2}|\bm{V}(\bm{x})|^2\Delta t \right)f_{\bm{x},\Delta t}(\bm{y})\bm{d}\bm{y} \\
&=\int_{\bm{z}\in\mathbb{R}^{d}}(u(\bm{x}-\bm{z})-G(\bm{x})\Delta t)\exp\left(\bm{V}(\bm{x})\cdot \bm{z}-\frac{1}{2}|\bm{V}(\bm{x})|^2\Delta t \right)f_{\bm{0},\Delta t}(\bm{z})\bm{d}\bm{z} \\
&= \int_{\bm{z}\in\mathbb{R}^{d}}(u(\bm{x}-\bm{z})-G(\bm{x})\Delta t)f_{\bm{V}(\bm{x})\Delta t, \Delta t}(\bm{z})\bm{d}\bm{z} \\
&= \int_{\bm{z}\in\mathbb{R}^{d}}u(\bm{x}-\bm{z})f_{\bm{V}(\bm{x})\Delta t, \Delta t}(\bm{z})\bm{d}\bm{z} -G(\bm{x})\Delta t.
\end{align}
where the third equality holds by the algebraic property 
\begin{equation}\bm{V}(\bm{x})\cdot \bm{z}-\frac{1}{2}|\bm{V}(\bm{x})|^2\Delta t -\frac{1}{2\Delta t} \bm{z}\cdot \bm{z}=-\frac{1}{2\Delta t}(\bm{z}-\bm{V}(\bm{x})\Delta t)\cdot (\bm{z}-\bm{V}(\bm{x})\Delta t)
\end{equation}
in the exponent.
\end{proof}
We note that for a nontrivial advection field $\bm{V}(\bm{x})$, the convolution is inhomogenous over the domain as, for each $\bm{x}$, it takes account for the neighborhood shifted toward the advection vector $\bm{V}(\bm{x})$ for a small {time interval} $\Delta t$ using the normal density function $\mathcal{N}(\bm{V}(\bm{x})\Delta t, \Delta t\mathbb{I})$. Instead of using the standard Brownian walkers $\bm{B}_t$, the stochastic process $\bm{X}_t$ in Eq.~\eqref{eq:q_stochastic_walkers} directly reflects the shift toward the field direction in the random sampling.  

\begin{remark}
The numerical target evalution using $q$-martingale in Eq.~\eqref{eq:q_stochastic_walkers} and Eq.~\eqref{eq:q_martingale_stochastic_process} has the same representation as Eq.~\eqref{eq:qtilde_target_eval}.
\end{remark}

The bootstrapping approach in DFLM demonstrated in Eq.~\eqref{eq:sgd_update} and Eq.~\eqref{eq:q_loss_discrete} is to update the neural network \textit{toward} the pre-evaluated target value using the current state of the neural network. That is, 
\begin{equation}
u(\bm{x};\bm{\theta}_{n+1}) \leftarrow (u(\cdot;\bm{\theta}_n) * f_{\bm{V}(\bm{x})\Delta t, \Delta t})(\bm{x}) -G(\bm{x})\Delta t.
\end{equation}
To achieve the pre-evaluated target, it may require multiple gradient descent steps in Eq.~\eqref{eq:sgd_update}. For instance, when updating $\bm{\theta}_n$ from $\bm{\theta}_{n-1}$, $M$ number of additional gradient descent steps could be considered as 
\begin{equation}
\bm{\theta}_{n-1}^{(m+1)}=\bm{\theta}_{n-1}^{(m)}-\alpha \nabla \widetilde{\mathcal{L}}_n\left(\bm{\theta}_{n-1}^{(m)}\right),~m=0,1,\cdots,M-1, {\quad} \bm{\theta}_{n-1}^{(0)}=\bm{\theta}_{n-1},~\bm{\theta}_{n-1}^{(M)} = \bm{\theta}_n.
\end{equation}
In the subsequent analysis, we assume that the update $u(\cdot;\bm{\theta}_{n+1})$ is equal to the target value function evaluated using $u(\cdot;\bm{\theta}_n)$. Formally, we define the target operator $T_n:\mathcal{L}^2(\Omega) \rightarrow \mathcal{L}^2(\Omega)$ at the $n$-th iteration as 
\begin{equation}
T_n :\mathcal{L}^2(\Omega) \rightarrow \mathcal{L}^2(\Omega), ~~~ (Tu)(\bm{x})=(u * f_{\bm{V}(\bm{x})\Delta t, \Delta t})(\bm{x}) -G(\bm{x})\Delta t
\end{equation}
under the regularity assumptions $\bm{V}, G \in \mathcal{L}^2(\Omega)$. When $\bm{V}$ and $G$ are independent from $u$, $T_{n+1}=T_n, \forall n \in \mathbb{N}_0$. The learning of DFLM is understood as the recursion of the operator $T_n$ with an initial function $u_0=u(\cdot;\bm{\theta}_0)$ as 
\begin{equation}
u(\cdot;\bm{\theta}_{n+1}) = u_{n+1} =T_nu_{n} = T_nu(\cdot;\bm{\theta}_n), \forall n \in \mathbb{N}_0.
\end{equation} 

\begin{example}
\normalfont
For the Laplace equation $\Delta u = 0 $ in $\Omega$, the training in DFLM is the recursion of the convolution of the normal density function $f_{\bm{0}, \Delta t}$ as 
\begin{equation}
u_{n+1} =  u_n * f_{\bm{0}, \Delta t}, \forall n \in \mathbb{N}_0.
\end{equation}
When the {time interval} $\Delta t$ is large, the convolution considers a broader neighborhood around each collocation point $\bm{x}$, given that the density function exhibits a long tail. Conversely, for a smaller {time interval} $\Delta t$, the convolution considers a more localized neighborhood. When $\Delta t \rightarrow 0$, $f_{\bm{0}, \Delta t}(\bm{x}) \rightarrow \delta(\bm{x})$ in the sense of distribution, in which $u_{n+1}(\bm{x}) = (u_n * \delta(\bm{x}))(\bm{x})=u_n(\bm{x})$,$\forall \bm{x} \in \Omega$, $\forall n \in \mathbb{N}_0$. In this case, the training process does not advance while staying at the initial function $u_0$. 
\end{example}

The above example shows that the training procedure depends on the choice of the {time interval} $\Delta t$. Opting for an excessively small {time interval} $\Delta t$ can result in the target function being too proximate to the current function, potentially leading to slow or hindered training progress. We aim to quantify how much training can be done at each iteration depending on the {time interval} $\Delta t$. For this goal, we need a lemma for the normal distribution.

\begin{lem}\label{lemma:expactation_abs_normal}
For a $k$-dimensional normal random variable $\bm{w}=(w_1,w_2,\cdots, w_k)$ with mean $\bm{\mu}$ and variance $\sigma^2\mathbb{I}, \sigma\in\mathbb{R}$, the expectation of the absolute value of $\bm{w}$ is bounded as 
\begin{equation}
\mathbb{E}[|\bm{w}|] \leq C_1\sigma \exp\left(-\frac{|\bm{\mu}|^2}{2\sigma^2} \right) + C_2|\bm{\mu}| 
\end{equation}
where the constant $C_1=k\sqrt{\frac{2}{\pi}}$ and $C_2 = k$ are independent of $\bm{\mu}$ and $\sigma$.
\end{lem}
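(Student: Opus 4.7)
The natural route is to reduce the multivariate problem to a sum of one--dimensional folded--normal expectations and then apply the closed form formula for each coordinate. First I would invoke the elementary inequality
\begin{equation*}
|\bm{w}| = \sqrt{w_1^2+\cdots+w_k^2} \;\leq\; \sum_{i=1}^{k}|w_i|,
\end{equation*}
which is just the fact that the $\ell^2$ norm is dominated by the $\ell^1$ norm on $\mathbb{R}^k$. Taking expectations and using linearity gives $\mathbb{E}[|\bm{w}|]\leq\sum_{i=1}^{k}\mathbb{E}[|w_i|]$, and by the independence built into the covariance $\sigma^2\mathbb{I}$ each $w_i\sim\mathcal{N}(\mu_i,\sigma^2)$ is a genuine scalar Gaussian.

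Next I would compute $\mathbb{E}[|w_i|]$ exactly using the folded--normal mean formula, which by a direct change of variable $u=(x-\mu_i)/\sigma$ and splitting the integral at $u=-\mu_i/\sigma$ yields
\begin{equation*}
\mathbb{E}[|w_i|] \;=\; \sigma\sqrt{\tfrac{2}{\pi}}\,\exp\!\Bigl(-\tfrac{\mu_i^2}{2\sigma^2}\Bigr) \;+\; \mu_i\,\mathrm{erf}\!\Bigl(\tfrac{\mu_i}{\sigma\sqrt{2}}\Bigr).
\end{equation*}
Since $|\mathrm{erf}(\cdot)|\leq 1$, the cross term is bounded by $|\mu_i|$, giving the clean componentwise estimate $\mathbb{E}[|w_i|]\leq \sigma\sqrt{2/\pi}\,e^{-\mu_i^2/(2\sigma^2)} + |\mu_i|$.

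Summing this over $i=1,\dots,k$ produces
\begin{equation*}
\mathbb{E}[|\bm{w}|] \;\leq\; \sigma\sqrt{\tfrac{2}{\pi}}\sum_{i=1}^{k}\exp\!\Bigl(-\tfrac{\mu_i^2}{2\sigma^2}\Bigr) \;+\; \sum_{i=1}^{k}|\mu_i|.
\end{equation*}
The second sum is controlled by $\sum_i|\mu_i|\leq k\,\max_i|\mu_i|\leq k|\bm{\mu}|$, delivering the $C_2=k$ constant on the $|\bm{\mu}|$ term.

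The main obstacle is consolidating the exponential sum into the stated factor $k\exp(-|\bm{\mu}|^2/(2\sigma^2))$. Note that because $\mu_i^2\leq|\bm{\mu}|^2$, each summand satisfies $e^{-\mu_i^2/(2\sigma^2)}\geq e^{-|\bm{\mu}|^2/(2\sigma^2)}$, so the componentwise estimate points in the opposite direction; the crude bound $\sum_i e^{-\mu_i^2/(2\sigma^2)}\leq k$ is clean but drops the exponential decay. To salvage the exponential factor as stated, I would concentrate the analysis on the regime $|\bm{\mu}|\lesssim\sigma$ (where $e^{-|\bm{\mu}|^2/(2\sigma^2)}\asymp 1$ and the bound reduces to the easy one), and in the complementary regime $|\bm{\mu}|\gg\sigma$ absorb the vanishing Gaussian factor into the leading $k|\bm{\mu}|$ term; alternatively, one can sharpen the scalar folded--normal estimate by exploiting $\mu_i\,\mathrm{erf}(\mu_i/(\sigma\sqrt{2}))\leq |\mu_i|-\sigma\sqrt{2/\pi}\,e^{-\mu_i^2/(2\sigma^2)}(1-O(1))$ so that a single exponential factor governs the full sum. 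Either way, the step from the coordinatewise sum to the advertised single--exponential envelope is the genuine technical point; the rest of the proof is a bookkeeping exercise in the folded--normal identity and the triangle inequality.
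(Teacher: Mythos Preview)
Your approach is identical to the paper's: bound $|\bm{w}|$ by $\sum_i|w_i|$, invoke the folded--normal mean formula for each coordinate, and then collapse the sum. The paper writes the folded--normal mean as $\sqrt{2/\pi}\,\sigma\,e^{-\mu_j^2/(2\sigma^2)}+\mu_j\bigl[1-2\Phi(-\mu_j/\sigma)\bigr]$, which is the same expression you wrote with $\mathrm{erf}$.

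You are right that the consolidation step is the only substantive issue, and your diagnosis is correct: since $\mu_i^2\le|\bm{\mu}|^2$, each term $e^{-\mu_i^2/(2\sigma^2)}$ dominates $e^{-|\bm{\mu}|^2/(2\sigma^2)}$, so one cannot pass from $\sum_i e^{-\mu_i^2/(2\sigma^2)}$ to $k\,e^{-|\bm{\mu}|^2/(2\sigma^2)}$ term by term. The paper's proof performs exactly this jump in its final line without comment, so the gap you have isolated is present there as well; you have in fact been more careful than the paper on this point.

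Two remarks on how to resolve it. First, your case--splitting idea is the natural repair: your coordinatewise bound already gives $\mathbb{E}[|\bm{w}|]\le C_1\sigma+\sqrt{k}\,|\bm{\mu}|$, and one then checks the elementary scalar inequality $C_1\sigma\le C_1\sigma\,e^{-|\bm{\mu}|^2/(2\sigma^2)}+(k-\sqrt{k})|\bm{\mu}|$ by comparing the regimes $|\bm{\mu}|\lesssim\sigma$ and $|\bm{\mu}|\gtrsim\sigma$; this can be made rigorous but needs a short calculation you have not yet supplied. Second --- and more to the point for the paper --- the only place the lemma is used (Theorem~\ref{thm:pointwise_learning}) immediately discards the exponential factor via $e^{-|\bm{V}(\bm{x})|^2\Delta t/2}\le 1$. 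So the weaker bound $\mathbb{E}[|\bm{w}|]\le C_1\sigma+C_2|\bm{\mu}|$, which your argument proves cleanly with no extra work, is all that is actually required downstream.
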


\begin{proof}\renewcommand{\qedsymbol}{}
Let  $f_{\mu_i, \sigma^2}$ be the density of the univariate normal with mean $\mu_i$ and variance $\sigma^2$. Also, let $\Phi$ be the cumulative distribution function (CDF) of the standard normal distribution. Since $w_i$, $i=1,2,\cdots,k$, are pairwise independent, the density function of $\bm{w}$ is equal to $\prod \limits_{i=1}^{k} f_{\mu_i, \sigma^2}(w_i)$. Thus, we have
\begin{align}
\mathbb{E}[|\bm{w}|]&=\int_{\mathbb{R}^d}|\bm{w}|\prod \limits_{i=1}^{k} f_{\mu_i, \sigma^2}(w_i)d\bm{w} \\
&\leq \int_{\mathbb{R}^d} \sum \limits_{j=1}^{d}|w_j|\prod \limits_{i=1}^{k} f_{\mu_i, \sigma^2}(w_i)d\bm{w} \\
&= \sum \limits_{j=1}^{d}\int_{\mathbb{R}^d} |w_j|\prod \limits_{i=1}^{k} f_{\mu_i, \sigma^2}(w_i)d\bm{w} \\
&= \sum \limits_{j=1}^{k}\int_{\mathbb{R}^d} |w_j| f_{\mu_j, \sigma^2}(w_j) \prod \limits_{i=1, i\neq j}^{k} f_{\mu_i, \sigma^2}(w_i)d\bm{w} \\
&= \sum \limits_{j=1}^{k}\int_{\mathbb{R}} |w_j| f_{\mu_j, \sigma^2}(w_j)dw_j \\
&= \sum \limits_{j=1}^{k} \sqrt{\frac{2}{\pi}} \sigma\exp\left(-\frac{\mu_j^2}{2\sigma^2}\right)+\mu_j\left[1-2\Phi\left(-\frac{\mu_j}{\sigma}\right) \right] \\
&\leq k\sqrt{\frac{2}{\pi}} \sigma\exp\left(-\frac{|\bm{\mu}|^2}{2\sigma^2}\right)+k|\bm{\mu}|,
\end{align}
where the second equality from the last holds as the sum of the expectations of the folded normal distributions. 
\end{proof}

Now, we are ready to prove the following theorem.
\begin{thm}\label{thm:pointwise_learning}
Let $\{u_n\}_{n=0}^{\infty}=\{u(\cdot;\bm{\theta}_n)\}_{n=0}^{\infty}$ be the sequence of the states of a neural network in the training procedure of DFLM starting from $u_0=u(;\bm{\theta}_0)$. We assume that $u_n \in C^{1}\left(\overline{\Omega}\right)$, $\forall n \in \mathbb{N}_0$. Then, the learning amount at each iteration measured by the pointwise difference in the consecutive states is approximated as 
\begin{equation}
|u_{n+1}(\bm{x})-u_n(\bm{x})| \leq \left|\nabla_{\bm{x}} u_n(\bm{x})\right| \left(C_1\sqrt{\Delta t} + C_2|\bm{V}(\bm{x})|\Delta t  \right) + |G(\bm{x})|\Delta t,
\end{equation} 
with constants $C_1=k\sqrt{\frac{2}{\pi}}$ and $C_2 = k$.
\end{thm}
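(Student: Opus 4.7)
The plan is to combine the convolutional update rule from the preceding lemma with Lemma~\ref{lemma:expactation_abs_normal}, using the $C^1$ regularity of $u_n$ to linearize the increment. Under the analysis assumption made in the paragraph before the theorem, we have $u_{n+1}(\bm{x}) = (u_n * f_{-\bm{V}(\bm{x})\Delta t,\,\Delta t})(\bm{x}) - G(\bm{x})\Delta t$, which I will rewrite as an expectation
\begin{equation}
u_{n+1}(\bm{x}) = \mathbb{E}[u_n(\bm{x}-\bm{w})] - G(\bm{x})\Delta t,
\end{equation}
where $\bm{w}\sim \mathcal{N}(-\bm{V}(\bm{x})\Delta t,\ \Delta t\,\mathbb{I})$. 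Subtracting $u_n(\bm{x})$ from both sides isolates the one-step learning increment as $\mathbb{E}[u_n(\bm{x}-\bm{w})-u_n(\bm{x})] - G(\bm{x})\Delta t$.

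Next I would apply the triangle inequality (and Jensen for the expectation) to get
\begin{equation}
|u_{n+1}(\bm{x})-u_n(\bm{x})| \le \mathbb{E}\bigl[|u_n(\bm{x}-\bm{w})-u_n(\bm{x})|\bigr] + |G(\bm{x})|\Delta t.
\end{equation}
Because $u_n\in C^1(\overline{\Omega})$ and the mass of $\bm{w}$ concentrates near $-\bm{V}(\bm{x})\Delta t$ with spread $\sqrt{\Delta t}$, the first-order Taylor expansion gives $|u_n(\bm{x}-\bm{w})-u_n(\bm{x})|\simeq |\nabla_{\bm{x}} u_n(\bm{x})|\cdot|\bm{w}|$ to leading order in $\Delta t$; this is the source of the ``approximated'' qualifier in the statement, and it is the only non-mechanical step of the proof.

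With that approximation, the bound reduces to $|\nabla_{\bm{x}} u_n(\bm{x})|\,\mathbb{E}[|\bm{w}|] + |G(\bm{x})|\Delta t$, so I plug $\bm{\mu}=-\bm{V}(\bm{x})\Delta t$ and $\sigma=\sqrt{\Delta t}$ into Lemma~\ref{lemma:expactation_abs_normal} to obtain
\begin{equation}
\mathbb{E}[|\bm{w}|] \le C_1\sqrt{\Delta t}\,\exp\!\left(-\tfrac{|\bm{V}(\bm{x})|^2\Delta t}{2}\right) + C_2|\bm{V}(\bm{x})|\Delta t,
\end{equation}
and then bound the exponential factor by $1$ to produce the cleaner estimate stated in the theorem. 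The main obstacle, if any, is justifying the linearization uniformly in a neighborhood the Gaussian actually samples; I would note that higher-order Taylor remainders contribute $O(\Delta t)$ terms absorbed into the displayed rate, which is consistent with the theorem being an approximation for small $\Delta t$ rather than a sharp inequality.
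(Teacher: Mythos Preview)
Your proposal is correct and follows essentially the same route as the paper: both write $u_{n+1}(\bm{x})-u_n(\bm{x})$ via the convolution formula, split off the $|G(\bm{x})|\Delta t$ term, replace $u_n(\bm{x}-\bm{z})-u_n(\bm{x})$ by its first-order Taylor approximation at $\bm{x}$ and bound by $|\nabla_{\bm{x}}u_n(\bm{x})|\,|\bm{z}|$, invoke Lemma~\ref{lemma:expactation_abs_normal} with $\bm{\mu}=-\bm{V}(\bm{x})\Delta t$ and $\sigma^2=\Delta t$, and finally drop the $\exp(-|\bm{V}(\bm{x})|^2\Delta t/2)$ factor. The only cosmetic difference is that the paper phrases the Gaussian average as an integral against $f_{-\bm{V}(\bm{x})\Delta t,\Delta t}$ rather than as $\mathbb{E}[\cdot]$ over $\bm{w}$, and records the linearization step as an equality involving an auxiliary vector $\bm{x}'(\bm{z})$ with $|\bm{x}'(\bm{z})|\le|\bm{z}|$; your remark that the Taylor remainder is the ``approximated'' content of the theorem matches exactly how the paper treats that step.
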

\begin{proof}\renewcommand{\qedsymbol}{}
\begin{align}
|u_{n+1}(\bm{x})-u_n(\bm{x})| &= |(T_nu_n)(\bm{x})-u_n(\bm{x})|\\
 &= \int_{\bm{z}\in\mathbb{R}^{d}}|u_n(\bm{x}-\bm{z})-u_n(\bm{x})|f_{-\bm{V}(\bm{x})\Delta t, \Delta t}(\bm{z})\bm{d}\bm{z} +|G(\bm{x})|\Delta t \\
&= \int_{\bm{z}\in\mathbb{R}^{d}}\left|\nabla_{\bm{x}} u_n(\bm{x})\cdot \bm{x}^{\prime}(\bm{z})\right|f_{-\bm{V}(\bm{x})\Delta t, \Delta t}(\bm{z})\bm{d}\bm{z} +|G(\bm{x})|\Delta t, ~~~ \left|\bm{x}^{\prime}(\bm{z}) \right| \leq |\bm{z}| 
\end{align}
\begin{align}
\hspace{31mm}&\leq \int_{\bm{z}\in\mathbb{R}^{d}}\left|\nabla_{\bm{x}} u_n(\bm{x})\right| |\bm{z}| f_{-\bm{V}(\bm{x})\Delta t, \Delta t}(\bm{z})\bm{d}\bm{z} +|G(\bm{x})|\Delta t  \\
& =  \left|\nabla_{\bm{x}} u_n(\bm{x})\right| \left(\int_{\bm{z}\in\mathbb{R}^{d}} |\bm{z}| f_{-\bm{V}(\bm{x})\Delta t, \Delta t}(\bm{z})\bm{d}\bm{z}\right) +|G(\bm{x})|\Delta t \\
& \leq \left|\nabla_{\bm{x}} u_n(\bm{x})\right| \left(C_1\sqrt{\Delta t}\exp\left(-\frac{|\bm{V}(\bm{x})|^2}{2} \Delta t \right) + C_2|\bm{V}(\bm{x})|\Delta t  \right) + |G(\bm{x})|\Delta t\\
& \leq \left|\nabla_{\bm{x}} u_n(\bm{x})\right| \left(C_1\sqrt{\Delta t} + C_2|\bm{V}(\bm{x})|\Delta t  \right) + |G(\bm{x})|\Delta t,
\end{align}
where the third equality holds by the taylor expansion of $u_n$ at $\bm{x}$ and the second inequality from the last holds by Lemma~\ref{lemma:expactation_abs_normal}.
\end{proof}
Theorem \ref{thm:pointwise_learning} states that for each point $\bm{x}$, the learning from the convolution is proportional to i) the magnitude of the gradient, ii) $\mathcal{O}(\sqrt{\Delta t})$ from the shape of the normal distribution and iii) $\mathcal{O}(\Delta t)$ in the advection. Also, the learning from the forcing term is of order $\mathcal{O}(\Delta t)$.

\begin{cor}\label{cor:uniform_learning}
Let $\{u_n\}_{n=0}^{\infty}=\{u(\cdot;\bm{\theta}_n)\}_{n=0}^{\infty}$ be the sequence of the states of a neural network in the training procedure of DFLM starting from $u_0=u(\cdot;\bm{\theta}_0)$. We assume that $u_n \in C^{1}\left(\overline{\Omega}\right)$, $\forall n \in \mathbb{N}_0$ and $\bm{V}, G \in C(\overline{\Omega})$. Then 
\begin{equation}
\|u_{n+1}-u_n\|_2 \leq (C_1\sqrt{\Delta t}+ C_2\|\bm{V}\|_{\infty}\Delta t) \|\nabla_{\bm{x}} u_n\|_2 + \Delta t\|G\|_2.
\end{equation}
In particular, if $\|\nabla u_n\|_2$ is uniformly bounded, $\|u_{n+1}-u_n\|_2 \rightarrow 0$ in order $\mathcal{O}(\sqrt{\Delta t})$ as $\Delta t \rightarrow 0$.
\end{cor}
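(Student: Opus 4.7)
The plan is to lift the pointwise estimate from Theorem \ref{thm:pointwise_learning} to an $L^2$ bound by applying Minkowski's inequality to the three terms on the right-hand side. Writing the pointwise inequality as a sum of three non-negative quantities,
\begin{equation*}
|u_{n+1}(\bm{x})-u_n(\bm{x})| \leq C_1\sqrt{\Delta t}\,|\nabla_{\bm{x}} u_n(\bm{x})| + C_2\Delta t\,|\bm{V}(\bm{x})|\,|\nabla_{\bm{x}} u_n(\bm{x})| + \Delta t\,|G(\bm{x})|,
\end{equation*}
taking $L^2(\Omega)$-norms on both sides and applying the triangle inequality splits the problem into three simple pieces.

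First, the $L^2$-norm of the first term is immediately $C_1\sqrt{\Delta t}\,\|\nabla_{\bm{x}} u_n\|_2$. For the second term, the continuity of $\bm{V}$ on the compact set $\overline{\Omega}$ gives that $\|\bm{V}\|_{\infty}$ is finite, so I would pull this sup-norm out of the integral to obtain the bound $C_2\|\bm{V}\|_\infty\Delta t\,\|\nabla_{\bm{x}} u_n\|_2$. The third term is simply $\Delta t\,\|G\|_2$, which is finite by continuity of $G$ on $\overline{\Omega}$. Summing these three bounds yields the claimed inequality.

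For the asymptotic claim, if $\|\nabla_{\bm{x}} u_n\|_2$ is uniformly bounded in $n$, the coefficient $C_1\sqrt{\Delta t} + C_2\|\bm{V}\|_\infty\Delta t$ is dominated by its $\sqrt{\Delta t}$ term as $\Delta t \to 0$, and the remaining $\Delta t\,\|G\|_2$ term is of lower order; combining these gives $\|u_{n+1}-u_n\|_2 = \mathcal{O}(\sqrt{\Delta t})$.

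There is no real obstacle here: the result is essentially a routine promotion of a pointwise bound to an integral bound, with the only mild care being the use of $\|\bm{V}\|_\infty$ (guaranteed finite by the continuity hypothesis on $\overline{\Omega}$) to decouple $\bm{V}(\bm{x})$ from the $|\nabla_{\bm{x}} u_n(\bm{x})|$ factor before taking norms.
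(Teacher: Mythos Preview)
Your proposal is correct and is exactly the intended argument: the paper states this as an immediate corollary of Theorem~\ref{thm:pointwise_learning} without writing out a separate proof, and what you have written---Minkowski's inequality applied to the three-term pointwise bound, with $|\bm{V}(\bm{x})|$ replaced by $\|\bm{V}\|_\infty$ before integrating---is precisely the routine promotion the paper has in mind.
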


The analysis underscores the critical role of choosing an appropriate time interval $\Delta t$ to ensure that stochastic walkers adequately explore local neighborhoods, thereby enabling effective training. As shown in Theorem \ref{thm:pointwise_learning} and Corollary \ref{cor:uniform_learning}, the required magnitude of $\Delta t$ depends not only on the advection field $\bm{V}$ and the forcing term $G$, but also on the neural network’s topology, which is tied to the underlying PDE solution. At the same time, increasing $\Delta t$ introduces additional bias into the loss function (Theorem \ref{thm:lossbias}), which can hinder convergence. A practical way to counteract this effect is to increase the number of stochastic walkers $N_s$, thereby reducing the variance and mitigating the impact of the bias during training.

\section{Numerical Experiments}\label{sec:experiment}

In this section, we present numerical examples to validate the analysis of DFLM with respect to the {time interval} $\Delta t$ and the number of stochastic walkers $N_s$. Other parameters (e.g., $N_r$ and $N_b$) are chosen sufficiently large to minimize their influence, allowing us to isolate the effects of $\Delta t$ and $N_s$.
As test problems, we consider (i) the Poisson equation and (ii) the Taylor–Green vortex fluid problem. For the Poisson equation, in addition to being a standard benchmark for numerical methods, the $q$- and $\tilde{q}$-martingales coincide due to the absence of an advection term. This identification allows direct sampling from the normal distribution without solving the stochastic differential equation, providing significant computational savings. Such efficiency enables us to perform extensive tests across a wide range of parameter variations.
The Taylor–Green vortex problem, in contrast, is a nonlinear PDE governed by the Navier–Stokes equations and involves explicit time dependence. Here, the presence of advection prevents martingale identification, and the stochastic differential equations for the walkers must be solved explicitly. As a result, the computational cost is considerably higher than in the Poisson case. Although we do not perform as extensive a parameter sweep for this problem, DFLM exhibits qualitatively similar behavior in both test cases.

\subsection{Poisson problem}\label{subsec:experimentsetup}
We solve the Poisson equation in the unit square $\Omega=(-0.5,0.5)^2$ with the homogeneous Dirichlet boundary condition\footnote{The homogeneous Dirichlet boundary condition minimizes the effect of the boundary treatment, which is not the interest of the current study.}
\begin{align}\label{eq:Poisson}
\begin{split}
\Delta u &= f \quad\textrm{in}\quad \Omega,  \\
u &= 0 \quad\textrm{on}\quad \partial \Omega.
\end{split}
\end{align}
We choose $f$ to be $-(2m\pi)^2\sin(2m\pi x_1)\sin(2m\pi x_2)$ so that the exact solution is 
\begin{equation}\label{eq:expsoln}
u(\bm{x})=\sin(2m\pi x_1)\sin(2m\pi x_2),\quad m \in \mathbb{N}.
\end{equation}
The empirical loss function at the $n$-th iteration for updating the neural network $\bm{\theta}_n$ is
\begin{equation}
\widetilde{\mathcal{L}}^{\Omega}_{n}(\bm{\theta}) = \frac{1}{N_r} \sum \limits_{i=1}^{N_r}\left| u(\bm{x}_i;\bm{\theta})-\frac{1}{N_s}\sum\limits_{j=1}^{N_s}\left\{ u\left(\bm{B}_{\Delta t}^{(i,j)};\bm{\theta}_{n-1} \right)-\int_0^{\Delta t}\frac{1}{2}f\left(\bm{B}_s^{(i,j)} \right) ds \right\} \right |^2.
\end{equation}
Here we use $N_r$ random sample collocation points $\{\bm{x}_i: 1\leq i \leq N_r\}$ and $N_s$ Brownian walkers $\left\{\bm{B}^{(i,j)}_s:\bm{B}^{(i,j)}_{0}=\bm{x}_i, 1\leq i \leq N_r, 1\leq j \leq N_s\right\}$ for each $\bm{x}_i$. To minimize the error in calculating the term related to $f$ and handling the boundary treatment, we use a small time step $\delta t\leq  \Delta t$ to evolve the discrete Brownian motion by the Euler-Maruyama method
\begin{equation}
\bm{B}_{m \delta t} = \bm{B}_{(m-1) \delta t} + \sqrt{\delta t} \bm{Z}, {\quad} \bm{Z} \sim \mathcal{N}(0, \mathbb{I}_2), ~ m \in \mathbb{N}.
\end{equation}
We note again that we can quickly draw samples from $\sqrt{\delta t}\bm{Z}$ and add to $\bm{B}_{(m-1)\delta t}$, which can be computed efficiently. Using these Brownian paths, the stochastic integral during the time period $[0, \Delta t]$, $\Delta t = M \delta t$, $M \in \mathbb{N}$, is estimated as
\begin{equation}
\int_0^{\Delta t}\frac{1}{2}f\left(\bm{B}_s^{(i,j)} \right) ds \simeq \sum \limits_{m=0}^{M-1}\frac{1}{2}f\left(\bm{B}^{(i,j)}_{m\delta t}\right)\delta t.
\end{equation}
\begin{figure}[!ht]
\centering
\includegraphics[width=1.0\textwidth]{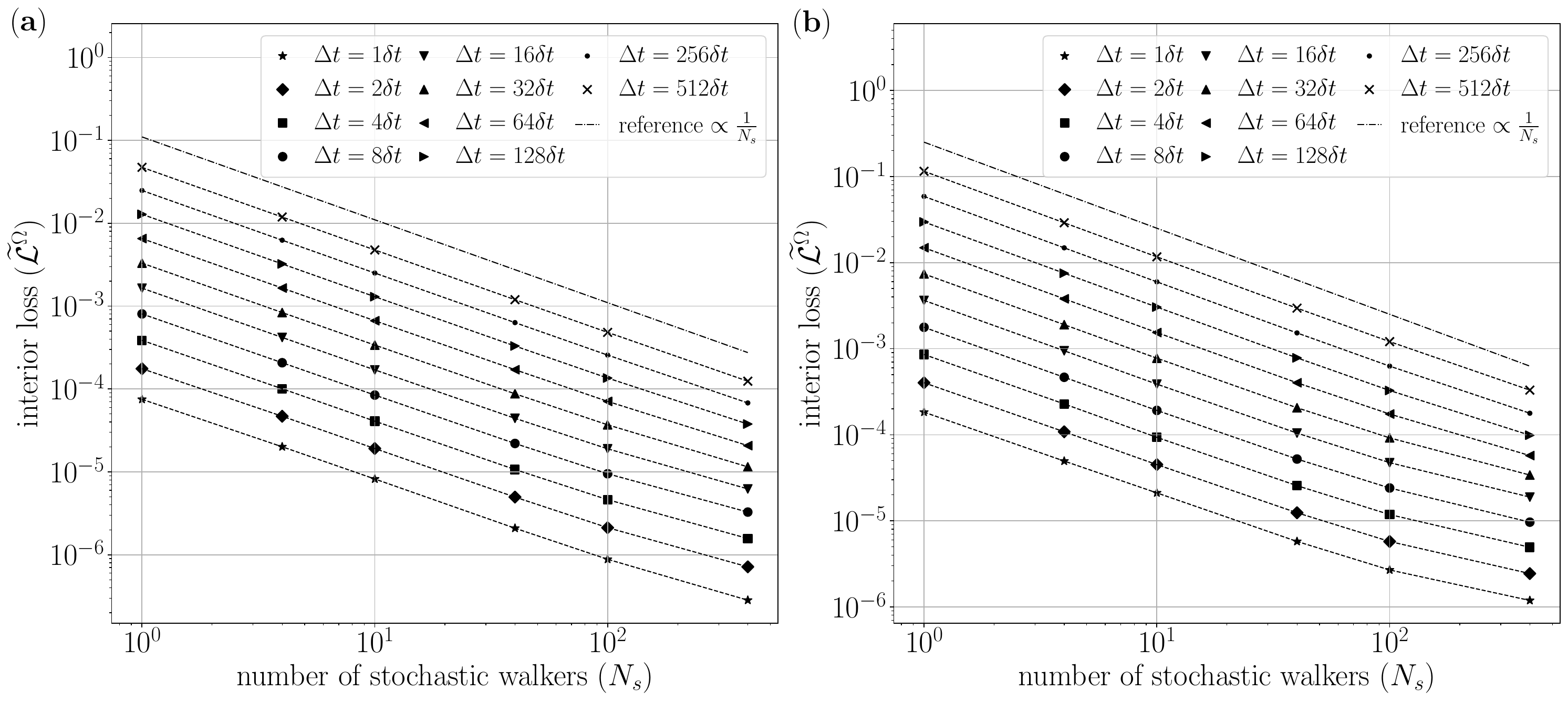}
\caption{Empirical interior training loss for various walker size $N_s$ (horizontal axis) and {time interval} $\Delta t$ (different line types). (a) $m=1$ and (b) $m=3$.}\label{fig:lossNs}
\end{figure}
We impose the homogeneous Dirichlet boundary condition on the stochastic process $\bm{B}_t$ by allowing it to be absorbed to the boundary $ \partial \Omega$ at the exit position. We estimate the exit position and the time by linear approximation within a short time period. When the simulation of a Brownian motion comes across the boundary between the time $m\delta t$ and $(m+1)\delta t$, we estimate the exit information $t_{\text{exit}}$, $t_{\text{exit}}\in [m\delta t, (m+1)\delta t]$ and $\bm{B}_{t_{\text{exit}}}$  by the intersection of line segment between $\bm{B}_{m \delta t}$ and $\bm{B}_{(m+1)\delta t}$ and the boundary $\partial \Omega$. Once a walker is absorbed, the value of the neural network at the exit position $u(\bm{B}_{t_{\text{exit}}};\bm{\theta})$ in the target computation is set to $0$, the homogeneous boundary value, and the time step $\delta t$ in the integral approximation is replaced by $(t_{\text{exit}}-m\delta t)$. To enhance the information on the boundary, we also impose the boundary loss term
\begin{equation}
\widetilde{\mathcal{L}}^{\partial \Omega}(\bm{\theta})  = \frac{1}{N_b}\sum \limits_{l=1}^{N_b} \left| u(\bm{x}_l;\bm{\theta}) - g(\bm{x}_l)\right|^2
\end{equation}
using $N_b$ boundary random collocation points.

To investigate the dependency of training trajectory on the choice of the {time interval}  $\Delta t$ and the number of stochastic walkers $N_s$, we solve the problem with various combinations of these two parameters while keeping the other parameters fixed.
Regarding the network structure, we use a standard multilayer perceptron (MLP) with three hidden layers, each comprising $200$ neurons and employing the ReLU activation function. The neural network is trained using the Adam optimizer \cite{adam} with learning parameters $\beta_1 = 0.99$ and $\beta_2 = 0.99$. At each iteration, we randomly sample  $N_r=2000$ interior and $N_b=400$ boundary points from the uniform distribution. We consider ten different {time interval}s $\Delta t= 2^{p}$, $p=0,1,2,\cdots, 9$ and six different stochastic walker sizes $N_s=1,4,10,40,100,400$, resulting in a total of sixty combinations of $(\Delta t, N_s)$. In considering the randomness of the training procedure, we run ten independent trials with extensive $1.5 \times 10^5$ iterations for each parameter pair, which guarantees the convergence of the training loss.

\begin{figure}[!ht]
\centering
\includegraphics[width=1.0\textwidth]{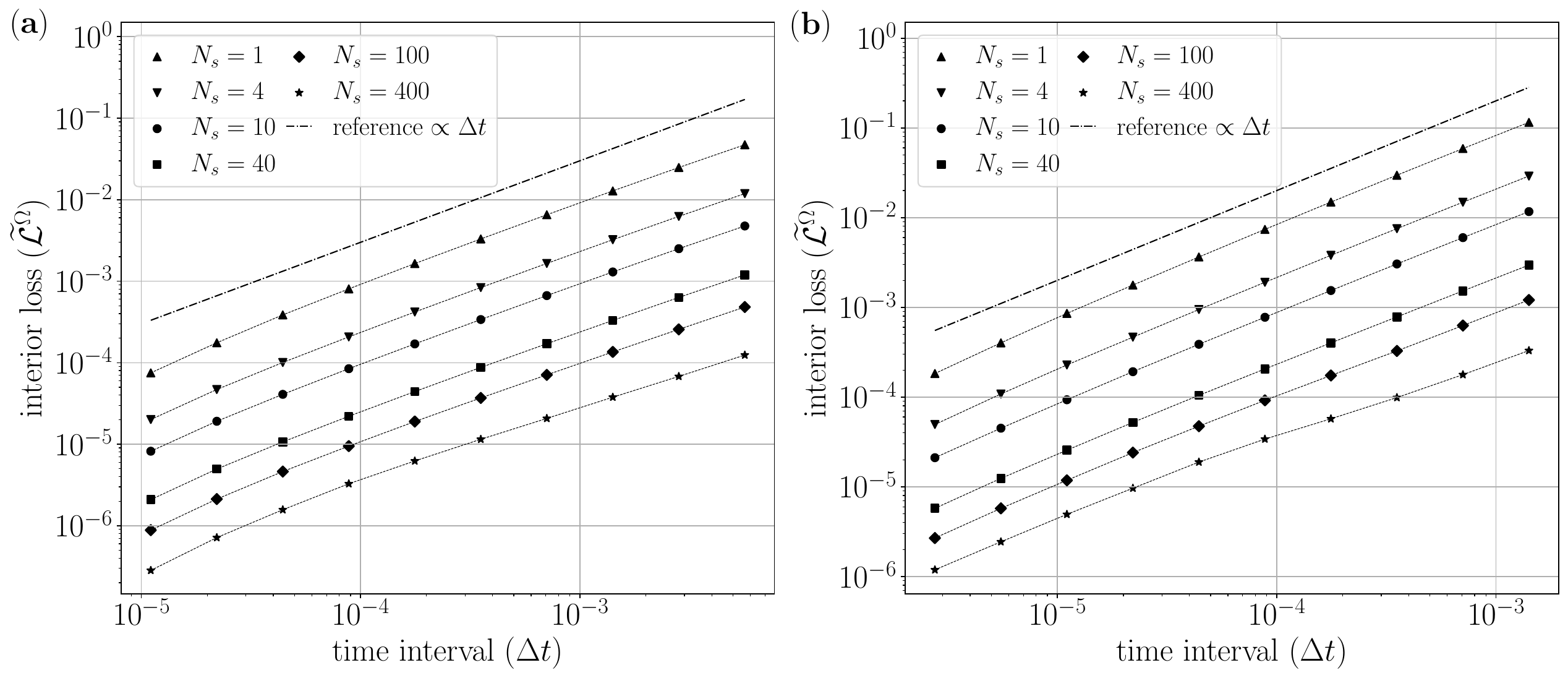}
\caption{Empirical interior training loss for various {time interval} $\Delta t$ (horizontal axis) and walker size $N_s$ (different line types). (a) $m=1$ and (b) $m=3$.}\label{fig:lossDt}
\end{figure}
We use the average interior training loss out of 10 trials to measure the training loss bias in Theorem \ref{thm:lossbias},  which we call `training loss.' We also consider two problems with different wavenumber $m=1$ and $3$ for Eq.~\eqref{eq:expsoln}. We consider two solutions with $m=1$ and $m=2$ to check the contribution from the different magnitude $\ell_2$ norms of the gradient. 

Figure \ref{fig:lossNs} shows the log-log plot of the training loss after convergence as a function of the walker size $N_s$ (horizontal axis) with various $\Delta t$ (different line types). Figure \ref{fig:lossNs} (a) and (b) are the cases with $m=1$ and $3$, respectively, and we can see that the training loss has a larger value for the more complicated case $m=3$  (about three times larger than the case of $m=1$), which the gradient magnitudes can explain for $m=1$ and $m=3$. As the analysis in the previous section predicts, the training loss decreases as the walker size increases for both cases, which aligns with the reference line of $\frac{1}{N_s}$ (dash-dot). In comparison between different line types, we can also check that the training loss decreases as the {time interval} $\Delta t$ decreases. The (linear) dependence of the training loss on $\Delta t$ is more explicit in Figure \ref{fig:lossDt}. Figure \ref{fig:lossDt} shows the training loss as a function of $\Delta t$ (horizontal axis) with various $N_s$ (different line types). As in the previous figure, Figure \ref{fig:lossDt} (a) and (b) show the results for the solution with $m=1$ and $3$, respectively. Compared to the reference line of $\Delta t$ (dash-dot), all training losses show a linear increase as $\Delta t$ increases.

We now check the test error after the training loss converges. In particular, we use the relative $\mathcal{L}^2$ error as a performance measure, calculated using the $1001\times 1001$ uniform grid. As discussed in the previous section, a small training loss does not always imply a small test error. In DFLM, if $\Delta t$ is sufficiently small, the left- and right-hand sides of Eq.~\eqref{eq:q-martingale} get close enough that the training loss can be small for an arbitrary initial guess, which fails to learn the PDE solution. 

\begin{figure}[!ht]
\centering
\includegraphics[width=1.0\textwidth]{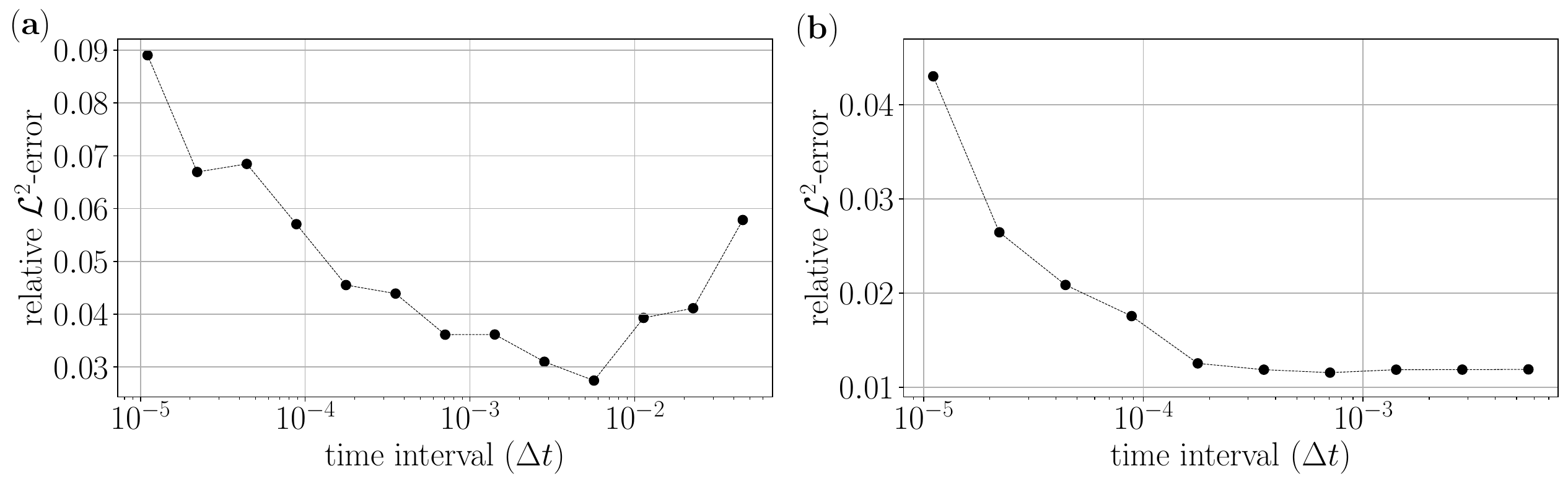}
\caption{Relative $\mathcal{L}^2$ test error for varying {time interval} $\Delta t$. (a) $N_s=1$ (b) $N_s=400$.}\label{fig:errDt}
\end{figure}
\begin{figure}[!ht]
\centering
\includegraphics[width=1.0\textwidth]{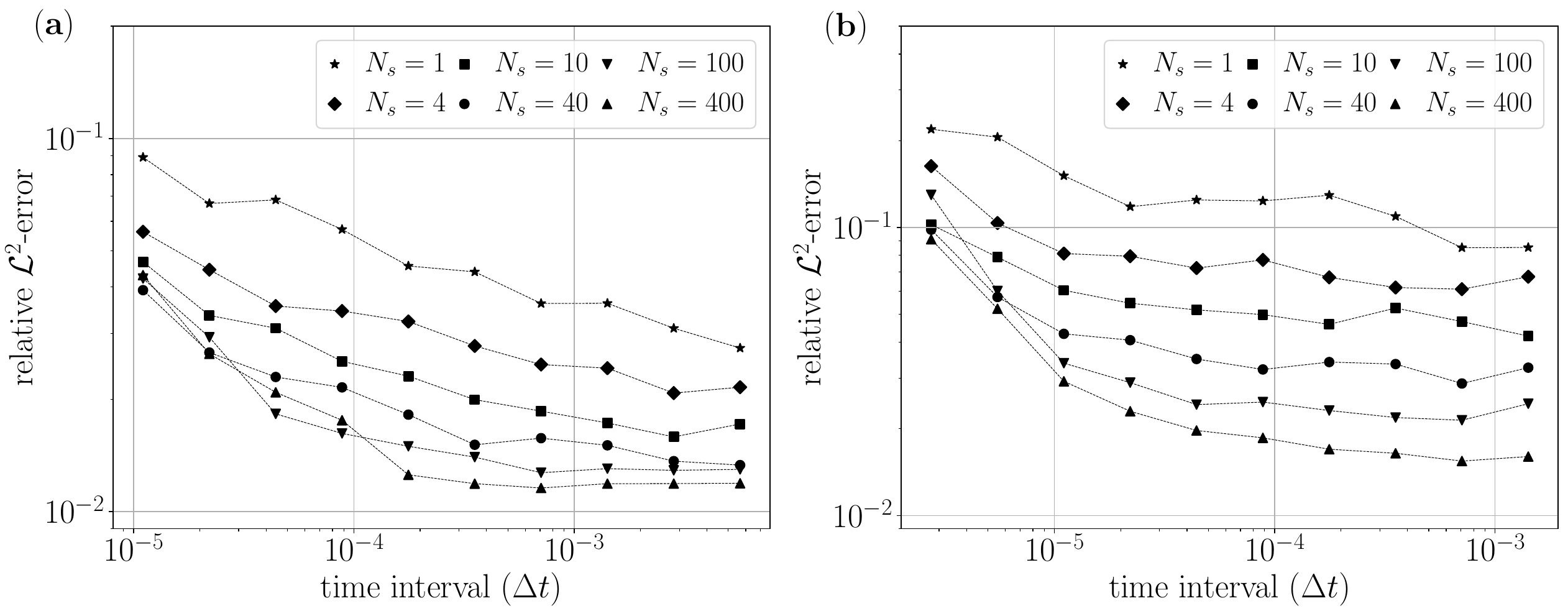}
\caption{Relative $\mathcal{L}^2$ test error as a function of {time interval} $\Delta t$ for various $N_s$ values. (a) $m=1$ and (b) $m=3$.}\label{fig:errDtNs}
\end{figure}
For the case of $N_s=1$ and $400$, Figure \ref{fig:errDt} shows the relative $\mathcal{L}^2$ test error as $\Delta t$ increases for the solution Eq.~\eqref{eq:expsoln} with $m=1$. When $\Delta t$ is small ($\leq 3\times 10^{-3}$), on the other hand, we can check that the training loss bias cannot explain the performance anymore. The test error increases as $\Delta t$ decreases regardless of the size of $N_s$. In other words, the result shows that the {time interval} $\Delta t$ must be sufficiently large for the network to learn the PDE solution by minimizing the training loss. When the training loss bias makes a non-negligible contribution with $N_s=1$, the test error can obtain the minimal value with an optimal $\Delta t\approx 5\times 10^{-2}$. If the bias contribution is small with $N_s=400$, the test error will be minimal with $\Delta t\geq 3\times 10^{-3}$. 
The increasing test error for decreasing $\Delta t$ is similar for other values of $N_s$ and solution types. Figure \ref{fig:errDtNs} shows the test relative error as a function of $\Delta t$ for the two solutions with $m=1$ (a) and $3$ (b) for various values of $N_s$. As $\Delta t$ decreases, the test error increases. Also, as $N_s$ decreases, which increases the sampling error in calculating the expectation, the test error increases for both solutions.

From Figure \ref{fig:errDtscale1n3}, which shows the test error of both solutions with $N_s=400$ and various $\Delta t$ values, we can also see that the optimal $\Delta t$ is related to the local variations of the solution. First, as we mentioned before, the solution with $m=3$ has a larger error as its gradient $\ell_2$ norm is larger than that of $m=1$, related to the loss bound and training update. Also, we use the same network structure, and all other parameters are equal for both solutions. Thus, it is natural to expect a much larger test error in the more complicated solution with $m=3$ than in the case of $m=1$. In comparison between the two solutions, we find that the test error of the more oscillatory solution ($m=3$) stabilizes much faster for a small $\Delta t$. That is, even using a small $\Delta t$, which yields a small neighborhood to explore and average, the more oscillatory solution case can see more variations than the simple solution case. Thus, the training loss can lead to a trainable result. 
\begin{figure}[!ht]
\centering
\includegraphics[width=.8\textwidth]{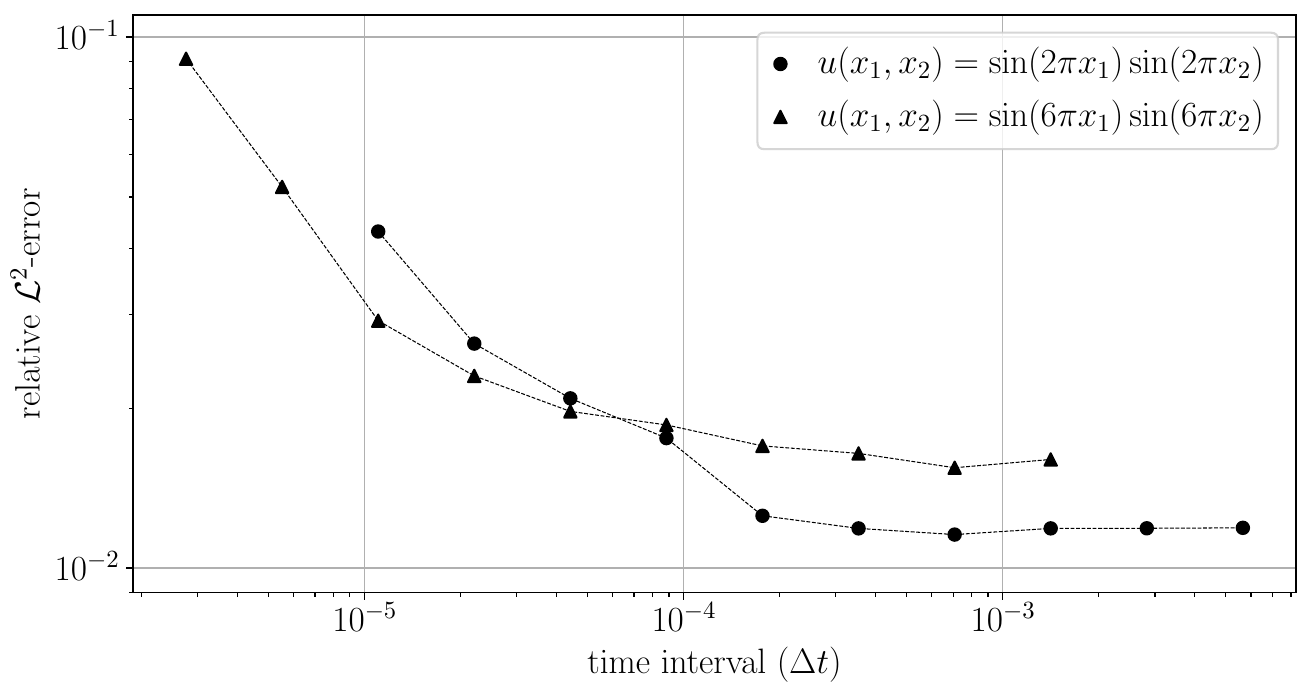}
\caption{Relative $\mathcal{L}^2$ test error as a function of {time interval} $\Delta t$ for the two solutions with $m=1$ (simple) and $m=3$ (more oscillatory). $N_s$ is fixed at $400$. }\label{fig:errDtscale1n3}
\end{figure}
Quantitatively, the optimal {time interval}s between the two solutions differ by a factor of about ten (that is, the more oscillatory solution can use $\Delta t$ ten times smaller than the one of the simple solution). This difference can be explained by the fact that the variance of the stochastic walkers is proportional to $\Delta t$, or the standard deviation is proportional to $\sqrt{\Delta t}$. As the more oscillatory solution has a wavenumber three times larger than the simple case, we can see that nine times shorter $\Delta t$ will cover the same variations as in the simple case, which matches the numerical result.

\subsection{Taylor-Green vortex problem}
To further validate our analysis, we consider the Taylor–Green vortex problem within the DFLM framework introduced in \cite{DFLMflow}. Specifically, we solve an initial value problem of incompressible Navier–Stokes equations in the unit square $\Omega = (0,1)^2$
\begin{eqnarray}
\label{eq:NSE}\frac{\partial \bm{u}}{\partial t} + \bm{u}\cdot \nabla\bm{u} &=& -\frac{1}{\rho} \nabla p + \nu \Delta \bm{u} + \bm{f}, ~~\text{in}~\Omega \times (0,T],\\
\label{eq:Incompressibility}\nabla \cdot \bm{u} &=& 0 ~~\text{in}~\Omega \times (0,T],\\
\bm{u}(\bm{x},0)&=&\bm{g}(\bm{x}),
\end{eqnarray}
where $\bm{u}(\bm{x},t) \in \mathbb{R}^2$ denotes the incompressible velocity field, $p(\bm{x},t)\in\mathbb{R}$ the pressure, $\rho$ the fluid density, $\nu$ the kinematic viscosity, and $\bm{f}$ an external forcing term. The boundary condition is periodic in both directions,
\begin{equation}
\bm{u}(x_1+1, x_2, t) = \bm{u}(x_1, x_2, t), \hspace{5mm} \bm{u}(x_1, x_2+1, t) = \bm{u}(x_1, x_2, t).
\end{equation}
We consideer the Taylor-Green problem with the following initial value 
\begin{equation}
\bm{u}(\bm{x}, 0)= (\sin(2\pi x_1) \cos(2\pi x_2), -\cos(2\pi x_1) \sin(2\pi x_2)) 
\end{equation}
and vanishing forcing term $\bm{f} = 0$. Under this setting, the exact solution is known as \cite{taylor1937mechanism} 
\begin{eqnarray}
\nonumber u_1(x_1,x_2,t) &=& \sin(2\pi x_1)\cos(2\pi x_2)e^{-8\pi^2 \nu t}, \\
\label{eq:TGanalyticsoln}u_2(x_1,x_2,t) &=& -\cos(2\pi x_1)\sin(2\pi x_2)e^{-8\pi^2 \nu t}, \\
\nonumber p(x_1,x_2,t) &=& -\frac{1}{4}(\cos(4\pi x_1)+\cos(4\pi x_2))e^{-16\pi^2 \nu t}.
\end{eqnarray}

The DFLM formulation for incompressible flows to handle incompressibility and time dependence is as follows. Following the idea in \cite{DFLMflow}, the solution of Eq.~\eqref{eq:NSE}-\eqref{eq:Incompressibility} admits the stochastic representation
\begin{equation}\label{eq:fluid_stochastic_representation}
\bm{u}(\bm{x},t) = \mathbb{E} \left[ \bm{u}(\bm{X}_{\Delta t}, t-{\Delta t})+ \int_0^{{\Delta t}} \bm{f}(\bm{X}_r, t-r)dr \bigg |\bm{X}_0=\bm{x} \right],
\end{equation}
where 
\begin{equation}
d\bm{X}_r = -\bm{u}(\bm{X}_r, t-r)dr + \sqrt{2\nu}d\bm{B}_r.
\end{equation}
The divergence free constraint Eq.~\eqref{eq:Incompressibility} is enforced by introducing a vector potential network $\bm{A}(\bm{x},t;\bm{\theta})$, where the velocity field is represented as 
\begin{equation}
\bm{u}(\cdot, \bm{\theta})=\nabla_{\bm{x}} \times \bm{A}(\cdot, \bm{\theta}).
\end{equation}
In contrast to the elliptic case discussed earlier, the stochastic representation in Eq.~\eqref{eq:fluid_stochastic_representation} links the physical time, $t$, of the governing equation to the stochastic time, $r$, associated with the walkers’ trajectories. The temporal evolution of the solution is determined by the historical flow states and external forcing, since the velocity at a current point $(\bm{x}, t)$ depends on the past values of $\bm{u}$ and $\bm{f}$ over a stochastic duration $s$, evolving backward in physical time. Specifically, the term $\bm{u}(\bm{X}_{\Delta t}, t-{\Delta t})$ accounts for the contribution of the past velocity field at time $t-{\Delta t}$, sampled in the spatial neighborhood reached by the backward walker $\bm{X}_{\Delta t}$, while the integral term $\int_0^{\Delta t} \bm{f}(\bm{X}_r, t-r),dr$ accumulates the effects of the forcing along the stochastic path $\bm{X}_r$ over the physical time interval $[t-{\Delta t}, t]$. We note that the physical time increment $\Delta t$ used to advance the solution coincides with the duration of the walkers’ stochastic evolution.
Using this solution representation, the training procedure follows the standard DFLM framework, where the loss function measures the discrepancy in the stochastic representation
\begin{equation}
\mathcal{L}(\bm{\theta})=\mathbb{E}\left[\left|\bm{u}(\bm{x,t};\bm{\theta}) - \mathbb{E} \left[ \bm{u}(\bm{X}_{\Delta t}, t-{\Delta t};\bm{\theta})+ \int_0^{\Delta t} \bm{f}(\bm{X}_r, t-r)dr \bigg |\bm{X}_0=\bm{x} \right]\right|^2\right]
\end{equation}
For the periodic boundary condition, walkers that cross the boundary are treated as re-entering continuously from the correpsonding opposite side (see, section 4.1 in \cite{DFLM}).

\begin{figure}[!ht]
\centering
\includegraphics[width=1.0\textwidth]{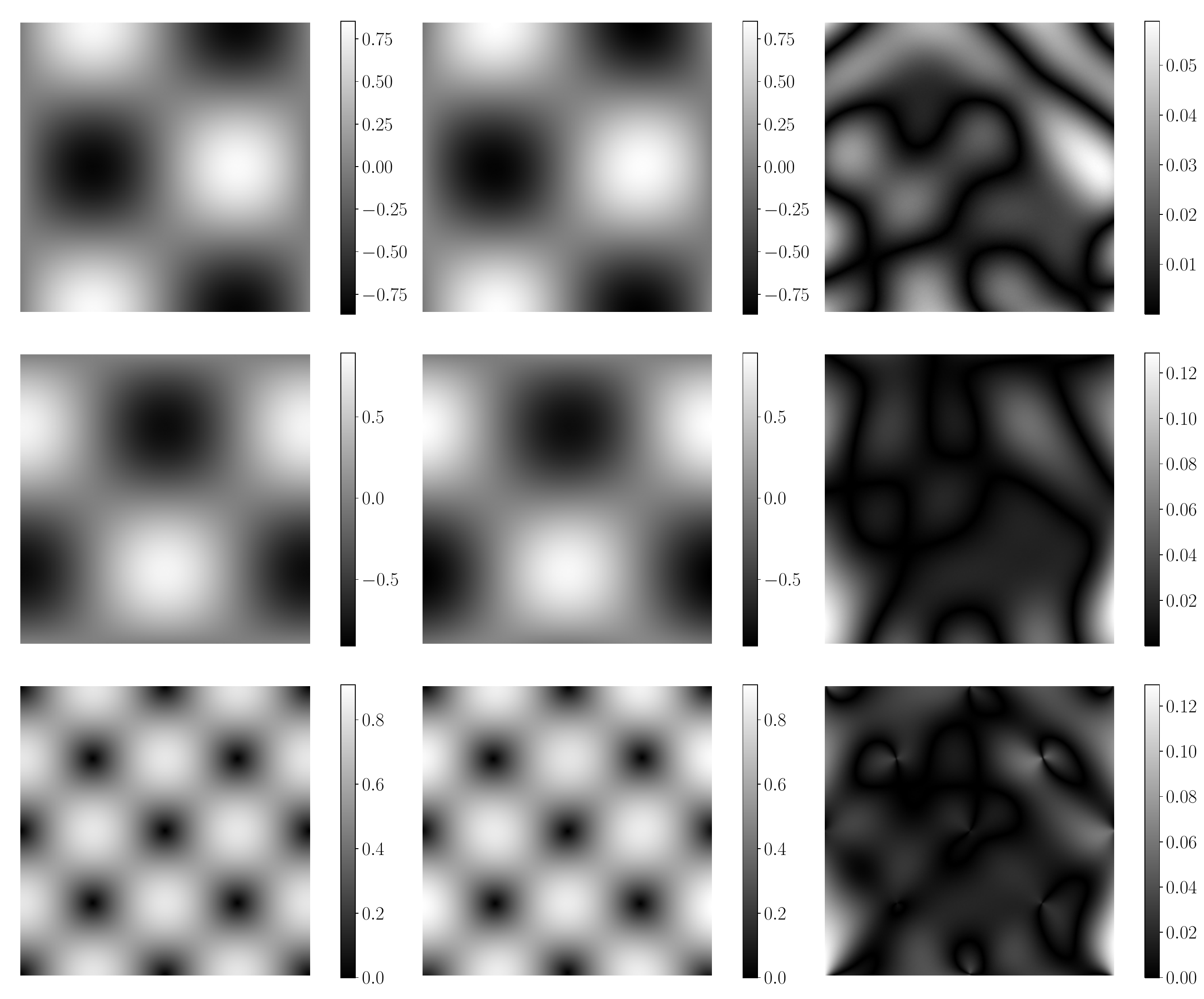}
\caption{Taylor–Green vortex approximation. Rows show $u_1$, $u_2$, and velocity magnitude $|\bm{u}|=\sqrt{u_1^2+u_2^2}$; columns show the exact solution, DFLM approximation, and pointwise error, respectively.}\label{fig:fluid_approx}
\end{figure}
The neural network used in this experiment is a standard multilayer perceptron (MLP) with three hidden layers of 200 neurons each and $\tanh$ activation functions. We sample $N_r=4000$ interior collocation points and employ $N_s=600$ stochastic walkers per point. Under this setup, the problem is solved up to $T=0.5$ using multiple time steps of the form $\Delta t = 3^{p}\delta t$, $p=0,1,2,3,4,5$, with $\delta t = 10^{-5}$. For reference, Fig.~\ref{fig:fluid_approx} compares the analytic solution (first column) with the DFLM solution obtained using $\Delta t = 3^{3}\delta t$ (second column). In comparison with the analytic solution Eq.~\eqref{eq:TGanalyticsoln}, DFLM captures the patterns of each components of the velocity fields, $u_1$ (first row) and $u_2$ (second row), along with velocity magnitude $|\bm{u}|$ (third row). The relative $\mathcal{L}^2$ errors obtained by different time steps are presented in Fig.~\ref{fig:fluid_errors}. In this test, DFLM has stable relative errors for $\Delta t\geq 3^{3}\delta t$. This result indicates again that sufficiently large time steps are required for DFLM to converge and stabilize for the fluid problem. 
\begin{figure}[!ht]
\centering
\includegraphics[width=1.0\textwidth]{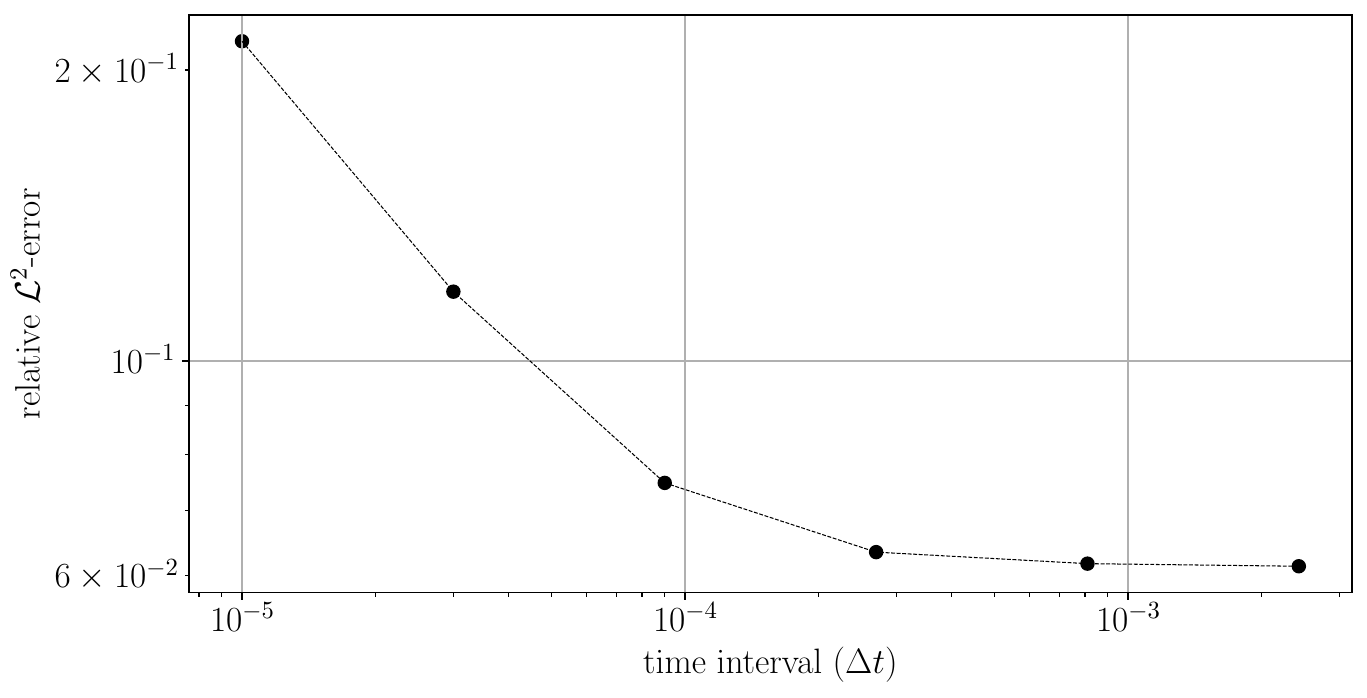}
\caption{Relative $\mathcal{L}^2$ test errors as a function of time interval $\Delta t$ in the Talyor-Green Vortex problem.}\label{fig:fluid_errors}
\end{figure}

\section{Discussions and conclusions}\label{sec:discussion}
The derivative-free loss method (DFLM) leverages the Feynman–Kac formulation to train neural networks for solving PDEs of the form Eq.~\eqref{eq:Quasi-linear elliptic}, including the Navier–Stokes equations Eq.~\eqref{eq:NSE}. In this study, we analyzed the bias of the empirical training loss and established that the loss becomes asymptotically unbiased as the number of walkers $N_s$ at each collocation point increases. The analysis further revealed that the bias grows proportionally with the time interval $\Delta t$, which governs how long stochastic walkers evolve to compute expectations. At the same time, we showed that $\Delta t$ must be sufficiently large to produce meaningful updates to the training loss; otherwise, the walkers fail to capture local variations in the solution. The numerical experiments for the Poisson and Tayler-Green vortex problems confirmed the existence of a problem-dependent lower bound on $\Delta t$, which reflects the local variability of the PDE solution. From a computational perspective, the results indicate that an efficient strategy is to identify the optimal lower bound of $\Delta t$ and then select $N_s$ as small as possible relative to this choice.

While our findings highlight the interplay between $\Delta t$ and $N_s$, an explicit quantitative method for determining the optimal lower bound of $\Delta t$ remains open. We expect this lower bound to depend strongly on solution characteristics, such as local oscillations or multiscale features. Extending the current analysis to multiscale PDEs is a natural next step. In such settings, the lower bound of $\Delta t$ is expected to shrink as oscillatory behavior intensifies. To address this, we envision adaptive or hierarchical time-stepping strategies inspired by hierarchical learning frameworks \cite{HiPINN}. By assigning different time intervals to distinct scale components of the solution and incorporating a hierarchical training procedure, one may accelerate convergence while preserving accuracy. Developing and testing such strategies is an important direction for future research.

\section*{Acknowledgments}
This work was supported by ONR MURI N00014-20-1-2595. 

\bibliographystyle{siam}
\bibliography{dflm_analysis}

\end{document}